\newtheorem{thm}{Theorem}[section]
\newtheorem{lem}[thm]{Lemma}
\newtheorem{prop}[thm]{Proposition}
\theoremstyle{definition}
\newtheorem{defn}[thm]{Definition}
\theoremstyle{remark}
\newtheorem{rmk}[thm]{Remark}
\newtheorem{ex}[thm]{Example}
\newtheorem{claim}[thm]{Claim}
\newtheorem*{comment}{Comment}
\newcommand{\R}{\mathbb{R}}
\newcommand{\N}{\mathbb{N}}
\newcommand{\Z}{\mathbb{Z}}
\newcommand{\C}{\mathbb{C}}
\newcommand{\T}{\mathbb{T}}
\newcommand{\inv}{^{-1}}
\DeclareMathOperator{\interior}{Int}
\DeclareMathOperator{\ev}{ev}
\DeclareMathOperator{\cspn}{\overline{span}}
\DeclareMathOperator{\Iso}{Iso}
\def\jbchange{\textcolor{red}}
\def\sarchange{\textcolor{blue}}
\def\nagychange{\textcolor[rgb]{0,0.5,0}}
\def\nagychangeII{\textcolor[rgb]{0.5,0,0.5}}
\newenvironment{nagy}{\color[rgb]{0,0.5,0}}{\color{black}}
\newenvironment{brown}{\color{red}}{\color{black}}
\newenvironment{sar}{\color{blue}}{\color{black}}
\renewcommand\sout[1]{}
\renewcommand\jbchange{}
\renewcommand\sarchange{}
\renewcommand\nagychange{}
\renewcommand\nagychangeII{}
\renewenvironment{brown}{}{}
\begin{document}

\title[A generalized Cuntz-Krieger uniqueness theorem]{A generalized Cuntz-Krieger uniqueness theorem for higher rank graphs}
\author{Jonathan H.~Brown}\address{Department of Mathematics, 138 Cardwell Hall, Kansas State university, Manhattan KS 66506, U.S.A., 785-532-0595}
\email{brownjh@math.ksu.edu}
\author{Gabriel Nagy}
\address{Department of Mathematics, 138 Cardwell Hall, Kansas State university, Manhattan KS 66506, U.S.A., 785-532-0592}
\email{nagy@math.ksu.edu}
\author{Sarah Reznikoff}
\address{Department of Mathematics, 138 Cardwell Hall, Kansas State university, Manhattan KS 66506, U.S.A., 785-532-0575}
\email{sararez@math.ksu.edu}
\thanks{This research was partially supported by NSF grant
      DMS 1201564.}

\begin{abstract} We present a uniqueness theorem for k-graph C*-algebras that requires
neither an aperiodicity nor a gauge invariance assumption.  Specifically,
 we prove that for the injectivity of a representation of a k-graph
 C*-algebra, it is sufficient that the representation be injective on a
 distinguished abelian C*-subalgebra. A crucial part of the proof is the
 application of an abstract uniqueness theorem, which says that such a
 uniqueness property follows from the existence of a jointly faithful
 collection of states on the ambient C*-algebra, each of which is the
 unique extension of a state on the distinguished abelian C*-subalgebra.

 \end{abstract}
 \keywords{k-graph, C*-algebra, state}
\subjclass[2010]{Primary  46L05; Secondary 46L30}

\maketitle
\section{Introduction}
A higher-rank graph of dimension $k$ or a {\em $k$-graph} is a generalization of a directed graph where paths ``look like'' convex subsets of the coordinate lattice in $\R^k$; directed graphs are canonically identified with $1$-graphs.  Kumjian and Pask introduced higher-rank graphs in 2000 \cite{KP00} and associated to a $k$-graph $\Lambda$ (row-finite and with no sources),  a $C^*$-algebra $C^*(\Lambda)$, which is universal for certain relations encoded in the $k$-graph.   Kumjian and Pask developed higher-rank graph graph algebras to simultaneously generalize graph algebras \cite{KPRR97} and the higher-rank Cuntz-Kreiger algebras constructed by Robertson and Steger \cite{RS99}.  Higher-rank graph algebras extend the class of $C^*$-algebras that can be studied by the combinatorial methods developed in the graph algebra literature.  For example, Pask, Raeburn, R\o rdam and A. Sims (\cite{PRRS06}) use higher-rank graphs to construct $A\T$ algebras which are simple but neither $AF$ nor purely infinite, and thus can not be realized as $1$-graph $C^*$-algebras \cite{KPR98}.  But the path structure of the higher-rank graph still reveals when the resulting algebra is simple  \cite{RS07} or purely-infinite \cite{Sim06} and determines  the (gauge-invariant) ideal structure \cite{KP00}  in much the same way it does for graph algebras.

In this paper we study representations of higher-rank graph algebras.  For a $k$-graph $\Lambda$ the universality of $C^*(\Lambda)$ states that a map from a higher-rank graph $\Lambda$ to a $C^*$-algebra $B$ that respects the Cuntz-Kreiger relations (see Definition~\ref{CK} below) gives a unique algebra homomorphism $C^*(\Lambda)\to B$.   This makes representations of graph algebras particularly easy to construct and shows $C^*(\Lambda)$ is generated by partial isometries of $\{S_\alpha \, |\, \alpha\in \Lambda\}$.  Kumjian and Pask \cite{KP00}  provide two uniqueness theorems that  guarantee a representation is injective: the \emph{gauge invariant uniqueness theorem} and the \emph{Cuntz-Krieger uniqueness theorem}.   The former requires that the representation transfer an action of $\T^k$ on the graph algebra to a well-defined action on the range space, the latter requires a fairly stringent condition (\nagychange{aperiodicity}) on the graph itself.   Once these conditions are satisfied then  both theorems say  representations $\pi: C^*(\Lambda)\to B$ that are injective on a certain canonical abelian subalgebra, $\mathscr{D}=C^*(\{S_\alpha^{} S_\alpha^* \, |\, \alpha\in \Lambda\})$ are injective.

As Szyma\'{n}ski points out \cite{szy02}, both the Cuntz-Krieger and gauge invariant u\-nique\-ness theorems are not applicable in certain situations; he then goes on to prove a more general version of these theorems in the context of $1$-graphs.  In this vein the second and third named authors \cite{NR12} prove a uniqueness theorem for $1$-graphs by relating the injectivity of a representation to injectivity of its restriction to a certain canonical abelian subalgebra. In this paper, we prove a uniqueness theorem (Theorem~\ref{thm: inj}) for higher-rank graphs, in which the aperiodicity condition is removed, thus going beyond the uniqueness theorems of Kumjian and Pask.  \jbchange{For  a row finite $k$-graph $\Lambda$ with no sources, we }identify an abelian subalgebra $\mathscr{M}$ of $C^*(\Lambda)$ such that, for a homomorphism $\pi: C^*(\Lambda)\to B$,
the injectivity of $\pi\big|_{\mathscr{M}}$ forces the injectivity of $\pi$. In general, the abelian subalgebra $\mathscr{M}$ used in Theorem~\ref{thm: inj} contains $\mathscr{D}$ and this containment is strict unless $\mathscr{D}$ is maximal abelian. The algebra $\mathscr{M}$ is generated by elements of the form $S_\alpha S_\beta^*$ with $(\alpha,\beta)\in \Lambda\times \Lambda$ \jbchange{cycline (See Definition~\ref{def: cycline})}.  Cycline pairs are related to the generalized cycles \jbchange{without entry} of
Evans and Sims \cite{ES}.

Theorem~\ref{thm: inj} extends the result of the second and third named authors
\cite[Theorem~3.13]{NR12}, and some steps are inspired by their proof, as well as by \cite{NR13}, where a framework for
``abstract'' uniqueness theorems is introduced.
As in the proof of the above mentioned result, we identify a dense subset of infinite paths $\mathfrak{T}$ (see Definition~\ref{def: reg path}) of $\Lambda$ and construct an injective representation of $C^*(\Lambda)$ on $C(\T^k)\otimes \nagychangeII{B(\ell^2(\mathfrak{T}))}$.  As cycles in higher-rank graphs are more complicated than those in $1$-graph algebras, identifying a suitable
$\mathfrak{T}$ is far more subtle in our situation; in particular it requires the use of the Baire Category Theorem.
Furthermore unlike the proof \cite[Theorem~3.13]{NR12}, the uniqueness theorem from \cite{NR13}, as well as of the
uniqueness theorems of Kumjian and Pask \cite{KP00},
 whose proofs required the construction of a (unique) conditional expectation from the algebra to the abelian subalgebra,
 our proof eliminates the need for conditional expectations. Instead, we analyze the states with unique extension property on $\mathscr{M}$ (Theorem~\ref{thm: state ext}), and generalize the uniqueness theorem from \cite{NR13}.  It would be useful to know whether  there exists a conditional expection of $C^*(\Lambda)$ onto $\mathscr{M}$ but we were unable to find a candidate.


\section{Preliminaries}

For any subset $D\subset X$ of a topological space $X$ we denote its closure by $\overline{D}$, its interior by $\interior(D)$ and its boundary by $\partial D$.

\subsection{Higher rank graphs}
We recall the basic definitions and terminology from \cite{KP00}. For any category $\mathscr{C}$ there exist two maps $r,s$  from the morphisms in $\mathscr{C}$ to its objects, so that for any morphism $c$, $r(c)$ is the range of the morphism and $s(c)$ is the source (or domain) of the morphism.  We identify the objects in all categories with their identity morphisms.

Let $k\in \N$.  A $k$-graph $\Lambda$ is a countable category along with a degree map $d_{\Lambda}: \Lambda\to \N^k$ that satisfies
\begin{enumerate}
\item $d_\Lambda(\mu\nu)=d_{\Lambda}(\mu)+d_{\Lambda}(\nu)$ for all $\mu,\nu\in \Lambda$ with $r(\nu)=s(\mu)$ and
\item  \emph{the unique factorization property:} For any $\lambda\in \Lambda$ such that $d_{\Lambda}(\lambda)=m+n$   there exists unique $\mu,\nu\in \Lambda$ with $\lambda=\mu\nu$ and $d_{\Lambda}(\mu)=m$ and $d_{\Lambda}(\nu)=n$.
\end{enumerate}
We denote $d_\Lambda$ by $d$ when the $k$-graph is clear from context.  We refer to objects (equivalently identity morphisms) in $\Lambda$ as \emph{vertices} and morphisms as \emph{paths}.  For $n\in \N^k$ define $\Lambda^n=d\inv\{n\}$; by unique factorization $\Lambda^0$ is the set of vertices in $\Lambda$.  For any $v\in \Lambda^0$ we denote $v\Lambda:=\{\lambda \, |\, r(\lambda)=v\}$. The sets $\Lambda v$, $v\Lambda^n$ and $\Lambda^n v$ are defined analogously.
A $k$-graph $\Lambda$ has \emph{no sources} if $v\Lambda^n$ is nonempty for all $v\in \Lambda^0$ and $n\in \N^k$;  $\Lambda$ is \emph{row-finite} if for all $v\in \Lambda^0$ and $n\in \N^k$, the set $v\Lambda^n$ is finite.

Let $\Omega_k:=\{(m,n)\in \N^k\times \N^k \, | \, m\leq n\}$.  (The order relation on $\N^k$ is the coordinate-wise order, that is, for $m=(m_1,\dots,m_k)$ and $n=(n_1,\dots,n_k)$, $m\leq n$ means
$m_j\leq n_j$, $\forall\,j=1,\dots,k$.)
Then $\Omega_k$ is a category with objects $\N^k$ and composition given by $(m,n)(n,r)=(n,r)$; it becomes a $k$-graph with degree map $d_\Omega(m,n)=n-m$.  The infinite path space $\Lambda^\infty$ of $\Lambda$ is the set of all degree-preserving covariant functors $x: \Omega_k\to \Lambda$.   By unique factorization infinite paths are completely determined by \jbchange{any sequence in $\N^k$ that is unbounded in every component direction.}

For $\mu\in \Lambda$, define
\[
Z(\mu):=\{x\in\Lambda^\infty \,  |\,  x(0, d(\mu))=\mu\}.
\]
The sets $\{Z(\mu)\}_{\mu\in \Lambda}$ define a basis for a totally disconnected second countable locally compact Hausdorff topology topology on $\Lambda^\infty$.

For $\alpha\in \Lambda$ and $x\in s(\alpha)\Lambda^\infty$, define $\alpha x$ to be the unique infinite path such that for any $n\geq d(\alpha)$, $(\alpha x)(0,n)=\alpha( x(0, n-d(\alpha)))$.

For any $p\in\N^k$, define \jbchange{\sout {the $p$-tail map}} $\sigma^p:\Lambda^\infty\to\Lambda^\infty$ by:
\[
\sigma^p(x)(m,n)=x(m+p,n+p),\quad\forall\,x\in\Lambda^\infty,\,(m,n)\in\Omega_k.
\]
Equivalently (by unique factorization), for $x\in\Lambda^\infty$,
$\sigma^p(x)\in\Lambda^\infty$ is the unique infinite path such that
$x(0,p)\sigma^p(x)=x$.

An infinite path $x\in \Lambda^\infty$ is \emph{periodic} if there exists $n\neq m\in \N^k$ such that $\sigma^n(x)=\sigma^m(x)$; $x$ is \emph{aperiodic} otherwise. \jbchange{Note that if there exists $\alpha\neq \beta$ and $y\in \Lambda^\infty$ such that $\alpha y=\beta y$ then $\alpha y$ is periodic.  A} $k$-graph $\Lambda$ is \emph{aperiodic} if for every $v\in \Lambda^0$, the set $v\Lambda^\infty$ contains an aperiodic path.

\begin{ex} Let $E$ be a directed graph.  If $e_1,\ldots e_n$ are edges in $E$ then $e_1e_2\cdots e_n$ is a path of length $n$ in $E$ if $r(e_i)=s(e_{i-1})$.  We denote the set of finite paths in $E$ by $E^*$.  When equipped with the degree map into $\N$ given by the length function, $E^*$ becomes a $1$-graph.  In a slight abuse of notation we denote this $1$-graph by $E$.
\end{ex}

\begin{ex}
\label{ex: fE} Suppose $E$ is a $1$-graph and $f:\N^k\to \N$ is a nonzero monoid homomorphism. Then the set $\Lambda=f^*E:=\{ (\mu, m): \mu\in E, \, m\in \N^k,\quad\text{and}\quad f(m)=d_E(\mu)\}$ with degree map $d(\mu,m)=m$ is a $k$-graph.
\end{ex}

\jbchange{For use later we show that for $\Lambda=f^* E$,  the space $\Lambda^\infty$ can be identified with $E^\infty$.}

\begin{prop}
\label{prop infty paths}
Let $E$ is a $1$-graph and $f:\N^k\to \N$ is a nonzero monoid homomorphism and $\Lambda=f^* E$.  Let $p_1: \Lambda \to E$ be the projection onto the first factor.  Then $p_1$ induces a homeomorphism from $\Lambda^\infty$  to $E^\infty$.
\end{prop}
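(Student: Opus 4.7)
The plan is to exhibit an explicit two-sided inverse $q$ to the map $p_1^\infty$ and then verify continuity and openness on the cylinder-set basis. Given $y\in E^\infty$, I would define $q(y)\colon\Omega_k\to\Lambda$ by
\[
q(y)(m,n):=\bigl(y(f(m),f(n)),\,n-m\bigr),\qquad (m,n)\in\Omega_k.
\]
This lands in $\Lambda=f^*E$ because $f$ is a monoid homomorphism, so $d_E(y(f(m),f(n)))=f(n)-f(m)=f(n-m)$; degree preservation is immediate; and a short calculation using functoriality of $y$ on $\Omega_1$ gives $q(y)(m,n)\,q(y)(n,p)=q(y)(m,p)$. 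Hence $q\colon E^\infty\to\Lambda^\infty$ is well-defined.

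For bijectivity, injectivity of $q$ follows because $f\neq 0$, so some basis vector $e_i$ has $f(e_i)>0$, making $\{f(Ne_i)\}_N$ an unbounded sequence in $\N$, and unbounded sequences determine elements of $E^\infty$. For surjectivity, starting from $x\in\Lambda^\infty$ I would write $x(m,n)=(\mu_{m,n},n-m)$; functoriality of $x$ forces $\mu_{m,n}\mu_{n,p}=\mu_{m,p}$, so $\{\mu_{0,Ne_i}\}_N$ are nested initial segments of lengths $f(Ne_i)\to\infty$ and determine a unique $y\in E^\infty$ with $y(0,f(Ne_i))=\mu_{0,Ne_i}$; unique factorization in $E$ then propagates this to $y(f(m),f(n))=\mu_{m,n}$ for every $(m,n)\in\Omega_k$, giving $q(y)=x$. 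Setting $p_1^\infty:=q\inv$, the equation $q(y)=x$ reads off as $p_1^\infty(x)(0,f(m))=p_1(x(0,m))$, confirming that $p_1^\infty$ really is induced by $p_1$.

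For the topology, $\Lambda^\infty$ has a basis of cylinders $Z((\mu,m))$ with $d_E(\mu)=f(m)$, and
\[
q\inv(Z((\mu,m)))=\{y\in E^\infty : y(0,f(m))=\mu\}=Z(\mu),
\]
so $q$ is continuous. Conversely, for $\mu\in E^n$ pick $m\in\N^k$ with $f(m)\geq n$ (possible because $f\neq 0$); then
\[
q(Z(\mu))=\bigcup_{\beta\in s(\mu)E^{f(m)-n}}Z((\mu\beta,m)),
\]
an open subset of $\Lambda^\infty$, so $q$ is open and therefore a homeomorphism. The one delicate point is that $f(\N^k)$ need not exhaust $\N$, so the data $\{\mu_{m,n}\}$ pin down $y$ only at a sparse set of degrees; the standard $1$-graph fact that any unbounded sequence of initial segments determines an infinite path closes that gap, and is what makes the surjectivity step go through cleanly.
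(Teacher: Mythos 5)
Your proposal is correct and follows essentially the same route as the paper: the paper's own proof uses the very same inverse formula $z\mapsto\bigl((p,q)\mapsto(z(f(p),f(q)),q-p)\bigr)$ for surjectivity, determines $p_1x$ by its initial segments at degrees $f(tn)$ (your propagation step is the content of the paper's Claim~\ref{clm: well def}), and identifies cylinder sets to get the homeomorphism. Your extra care in writing $q(Z(\mu))$ as a union over extensions $\beta$ when $d_E(\mu)\notin f(\N^k)$ is a nice touch that the paper glosses over, but it is not a different method.
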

First we need the following claim.

\begin{claim}
\label{clm: well def}
Let $x\in \Lambda^\infty$ and $m,n\in \N^k$ with $f(m)=f(n)$. Then $p_1 x(0, m)=p_1 x(0,n)$.
\end{claim}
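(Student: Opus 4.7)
The plan is to reduce the claim to unique factorization applied once in $\Lambda$ and once in $E$, by passing to the coordinate-wise join of $m$ and $n$. The key observation is that composition in $\Lambda = f^*E$ is given componentwise: $(\mu,q)(\nu,r)=(\mu\nu,q+r)$ whenever $s_E(\mu)=r_E(\nu)$ (so that $f(q)=d_E(\mu)$, $f(r)=d_E(\nu)$ are compatible with the $E$-composition). In particular $p_1$ is a degree-preserving functor from $\Lambda$ to $E$ in the sense that $p_1$ turns a factorization in $\Lambda$ into a factorization in $E$ of degrees $f(\cdot)$.

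Concretely, I would set $p = m \vee n \in \N^k$, so $m \leq p$ and $n \leq p$. Since $x \colon \Omega_k \to \Lambda$ is a degree-preserving covariant functor, applying it to the two composable pairs $(0,m)(m,p)=(0,p)$ and $(0,n)(n,p)=(0,p)$ yields
\[
x(0,m)\, x(m,p) \;=\; x(0,p) \;=\; x(0,n)\, x(n,p)
\]
in $\Lambda$. Writing $x(0,p)=(\lambda,p)$ with $\lambda\in E$ and $d_E(\lambda)=f(p)$, and projecting via $p_1$, these two factorizations become
\[
p_1 x(0,m)\cdot p_1 x(m,p) \;=\; \lambda \;=\; p_1 x(0,n)\cdot p_1 x(n,p)
\]
in $E$, where $d_E\bigl(p_1 x(0,m)\bigr)=f(m)$ and $d_E\bigl(p_1 x(0,n)\bigr)=f(n)$.

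Since $f(m)=f(n)$, both right-hand sides split $\lambda$ into a first segment of $E$-degree $f(m)=f(n)$ and a second segment of $E$-degree $f(p)-f(m)=f(p-m)=f(p-n)$. The unique factorization property in the $1$-graph $E$ then forces the first segments to agree, i.e.\ $p_1 x(0,m)=p_1 x(0,n)$, which is the claim.

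There is really no substantive obstacle here: the only subtlety is to match the $\Lambda$-composition with the $E$-composition correctly so that $p_1$ transports factorizations, and to make sure one invokes unique factorization in the correct $k$-graph at each step ($\Lambda$ to split $x(0,p)$, then $E$ to compare the two splittings of $\lambda$).
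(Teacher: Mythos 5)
Your proof is correct and follows essentially the same route as the paper: factor $x(0,P)$ through a common upper bound $P$ of $m$ and $n$, project via $p_1$, and invoke unique factorization in $E$ using $f(m)=f(n)$. The only (immaterial) difference is that you take $P=m\vee n$ where the paper takes $P=m+n$.
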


\begin{proof}
$p_1 (x(0,m))p_1(x(m, m+n))=p_1(x(0,m+n))=p_1(x(0,n))p_1(x(n,n+m))$.  Now $d_E(p_1 (x(0,m)))=f(m)=f(n)=d_E(p_1(x(0,n)))$ so by unique factorization $p_1(x(0,m))=p_1(x(0,n))$ as desired.
\end{proof}

\begin{proof}[Proof of Proposition~\ref{prop infty paths}]
Since $f$ is nonzero there exists an $n\in \N^k$  with $f(n)\neq 0$. For $x\in \Lambda^\infty$ take $p_1 x$ to be the unique infinite path in $E^\infty$ characterized by $(p_1 x)(0, t(f(n)))=p_1(x(0, tn))$ for all $t\in \N$.  By Claim~\ref{clm: well def} this does not depend on the choice of $n$.    By a slight abuse of notation we denote the map $x\mapsto p_1 x$ by $p_1$.  Since $x(0,m)=(p_1(x(0,m)),m)$, if $p_1x=p_1 y$ then $x=y$: that is $p_1$ is injective.  For $z\mapsto((p,q)\mapsto (z(f(p), f(q)),q-p)$ is an inverse to $p_1$ so $p_1$ is surjective.  By Claim~\ref{clm: well def}  we also have $p_1(Z(\alpha))=Z(p_1(\alpha))$, so that $p_1$ is continuous and open and hence a homeomorphism.
\end{proof}

\subsection{\boldmath{$k$}-graph \boldmath{$C^*$}-algebras}

\nagychange{
\begin{defn}
\label{CK}
Let $\Lambda$ be a row-finite $k$-graph with no sources, and let $A$ be a $C^*$-algebra.
A Cuntz-Krieger
$\Lambda$-family in $A$ is a functor $\lambda\mapsto T_\lambda$ from $\Lambda$ into the set of partial isometries of $A$ such that:
\begin{enumerate}
\item[(CK1)]\label{CK1} $T_{\mu}^*T^{}_{\nu}=\delta_{\mu,\nu} T^{}_{s(\mu)}T_{s(\mu)}^*$,
for any $n\in \N^k$ and $\mu, \nu\in \Lambda^n$;  and
\item[(CK2)]\label{CK2} $T_{v}^*T^{}_v=\sum_{\lambda\in v\Lambda^n} T^{}_\lambda T_\lambda^*$,
for any $n\in \N^k$ and $v\in \Lambda^0$.
\end{enumerate}
\end{defn}}

\jbchange{
A quick computation taking $n=0$ in (CK2) and using functoriality shows $T_v$ is a projection (i.e. $T^2_v=T^*_v=T^{}_v$) for all $v\in \Lambda^0$.
Further, by taking $n=0$ in (CK1), it follows that that $T^{}_v$ and $T^{}_w$ are orthogonal for $v\neq w\in \Lambda^0$. Thus the relations presented here are equivalent to those given in \cite{KP00}.
}

The $C^*$-algebra of $\Lambda$, denoted $C^*(\Lambda)$, is the unique (up to isomorphism) $C^*$-alge\-bra generated by a universal Cuntz-Krieger $\Lambda$-family $S$.  That is if $T$ is a Cuntz-Krieger $\Lambda$-family in $A$ then
$S^{}_\lambda\mapsto T^{}_\lambda$ induces a unique $*$-homomorphism from $C^*(\Lambda)$ to $A$.

\nagychange{Using (CK1) and (CK2) it follows immediately that
$$C^*(\Lambda)=\overline{\text{span}}\{S^{}_\mu S^*_\nu\,:\,
\mu,\nu\in\Lambda,\,s(\mu)=s(\nu)\}.$$
The elements $S^{}_\mu S^*_\nu$ in the above list are referred to as the {\em standard generators}.
(The list does not contain the products $S^{}_\mu S^*_\nu$ with
$s(\mu)\neq s(\nu)$, because all such products are zero.)
For convenience for each $\lambda\in \Lambda$ we denote $P^{}_\lambda:= S^{}_\lambda S_\lambda^*$.
Observe that for  $v\in \Lambda^0$, $P^{}_v= S^{}_v$.}
\begin{brown}
Define
\[
\mathscr{D}:=C^*(\{P^{}_\mu \,  |\, \mu\in \Lambda\}).
\]
The subalgebra $\mathscr{D}$ is abelian with spectrum $\Lambda^\infty$: that is, $\mathscr{D}\cong C_0(\Lambda^\infty)$. Under this isomorphism $P_\mu$ is taken to the characteristic function on $Z(\mu)$.
\end{brown}
\begin{defn}
Consider the compact abelian group $\T^k$, which has $\mathbb{Z}^k$ as its character group, identified as follows.
For any $n=(n_1,\dots,n_k)\in\mathbb{Z}^k$, the associated character is the map $h_n:\T^k\to \T$, defined by
$h_n(z)=z_1^{n_1}\cdots z_k^{n_k}$, for all $z=(z_1,\dots,z_k)\in\T^k$.
\end{defn}

Since for any $z\in \T^k$, the correspondence $\Lambda\ni\lambda\mapsto h_{d(\lambda)}(z) S^{}_\lambda\in C^*(\Lambda)$ is another Cuntz-Krieger $\Lambda$-family in $C^*(\Lambda)$, it induces an automorphism of $C^*(\Lambda)$.  This gives an action of $\T^k$ on $C^*(\Lambda)$, which is referred to as the {\em gauge action}.

The \emph{gauge invariant uniqueness theorem} of \jbchange{Kumjian and Pask \cite[Theorem~3.4]{KP00} \sout{Raeburn, Sims and Yeend \cite[Theorem~4.1]{RSY03} }}
states that, if $A$ is a C*-algebra with an action of $\T^k$,
and $\Phi: C^*(\Lambda)\to A$ is a $\T^k$-equivariant $*$-homomorphism such that
 $\Phi(P^{}_v)\neq 0$ for all $v\in \Lambda^0$, then $\Phi$ is injective.


\subsection{States and Representations}

For any $C^*$-algebra $A$, we denote its state space by $S(A)$,  the set of all pure states on $A$ by $P(A)$, \jbchange{and the set of quasi states by $Q(A)$: that is, $Q(A)=\{\phi \, | \, \phi \text{ positive}, \|\phi\| \leq 1\}$. Recall $Q(A)$ is a $w^*$-compact convex set.}

If $x$ is some point in $X$, we denote the evaluation map
$C_0(X)\ni f\mapsto f(x)\in\mathbb{C}$ by $\ev_x$, so that the correspondence $x\mapsto \ev_x$ establishes a homeomorphism
$X\simeq P(C_0(X))$, when we equip the range space with the $w^*$-topology. For each $x\in X$, the maximal ideal
$\ker(\ev_x)$ will be denoted by $C^x_0(X)$.

For future use we record here three easy technical results; the first two are well-known, but we
supply the proofs for the non-expert reader convenience.

\begin{prop}
\label{state eq}
Let $A$ be a C*-algebra and $\phi\in S(A)$.  If $b \geq 0$ is an element in $A$ with $\phi(b)=1=\|b\|$, then
$\phi(a)=\phi(ba)=\phi(ab)$, for all $a\in A$.

\begin{proof}  By extending $\phi$ to a state on the unitization $\tilde{A}$, we can assume $A$ is unital.
Since $0 \leq b \leq 1$, we have $0 \leq (1-b)^2 \leq 1-b$ and thus $0 \leq \phi((1-b)^2)\leq \phi(1-b)=0$.  Applying Cauchy-Schwartz, we have $|\phi(a(1-b))|^2 \leq \phi(aa^*)\phi((1-b)^2)$, and so $\phi(a)-\phi(ab)=\phi(a(1-b))=0$. \end{proof}
\end{prop}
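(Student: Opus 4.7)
The plan is to show that $\phi(a(1-b))=0$ for all $a$ (working in the unitization if necessary), which gives $\phi(a)=\phi(ab)$, and then to run the symmetric argument on the other side.

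First I would reduce to the unital case by extending $\phi$ to a state on the unitization $\tilde A$, so that the expression $1-b$ makes sense. The key observation is that, since $b\geq 0$ and $\|b\|=1$, we have $0\leq b\leq 1$, hence $0\leq 1-b\leq 1$. Multiplying $1-b\geq 0$ on both sides by the positive square root $(1-b)^{1/2}$ (or just invoking the operator inequality $0\leq c\leq 1 \Rightarrow c^2\leq c$), one gets the crucial operator inequality $(1-b)^2\leq 1-b$. Applying the positive, unital functional $\phi$ yields
\[
0\leq \phi\bigl((1-b)^2\bigr)\leq \phi(1-b)=1-\phi(b)=0,
\]
so $\phi((1-b)^2)=0$.

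Next I would invoke the Cauchy--Schwarz inequality for states, in the form $|\phi(xy)|^2\leq \phi(xx^*)\phi(y^*y)$. Taking $x=a$ and $y=1-b$ (note $(1-b)^*(1-b)=(1-b)^2$ since $1-b$ is self-adjoint), this gives
\[
|\phi(a(1-b))|^2 \leq \phi(aa^*)\cdot\phi\bigl((1-b)^2\bigr)=0,
\]
so $\phi(a)-\phi(ab)=\phi(a(1-b))=0$, proving the first equality. For the second equality $\phi(a)=\phi(ba)$, I would rerun the same argument with the roles of $x$ and $y$ swapped, taking $x=1-b$ and $y=a$ to bound $|\phi((1-b)a)|^2\leq \phi((1-b)^2)\phi(a^*a)=0$.

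There is no serious obstacle here; this is a standard consequence of the fact that the GNS vector associated to $\phi$ is an eigenvector of $\pi_\phi(b)$ with eigenvalue $1$, expressed without explicitly constructing the GNS representation. The only mildly subtle step is the operator inequality $(1-b)^2\leq 1-b$, which I would justify either by continuous functional calculus on the commutative $C^*$-algebra generated by $b$ (where the scalar inequality $(1-t)^2\leq 1-t$ holds on $[0,1]$) or by the algebraic manipulation $(1-b)-(1-b)^2=(1-b)b$, which is a product of two commuting positive elements and hence positive.
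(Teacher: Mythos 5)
Your proposal is correct and follows essentially the same route as the paper's proof: reduce to the unital case, derive $(1-b)^2\leq 1-b$ to conclude $\phi((1-b)^2)=0$, and apply Cauchy--Schwarz to kill $\phi(a(1-b))$. You are slightly more thorough than the paper in justifying the operator inequality and in explicitly running the symmetric argument for $\phi(a)=\phi(ba)$, which the paper leaves implicit.
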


\begin{prop}
\label{pure 1}
 Let $K$ be a compact Hausdorff space. If $\psi$ is a pure state on $C(K) \otimes A$, then there exists an $ x \in K$ and a pure state $\phi$  on $A$ such that~$\psi = \ev_x \otimes \phi$.
\end{prop}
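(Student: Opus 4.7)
The plan is to analyze $\psi$ through its GNS triple $(\pi_\psi,H_\psi,\xi_\psi)$, exploiting the fact that $C(K)\otimes 1$ is central in $C(K)\otimes A$. If $A$ is not unital, I would first pass to the unique pure extension of $\psi$ to $C(K)\otimes\tilde{A}$ that sends the adjoined unit to $1$, so without loss of generality $A$ is unital. Because $\psi$ is pure, $\pi_\psi$ is irreducible on $H_\psi$.

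The key observation is that for each $f\in C(K)$, the element $f\otimes 1$ commutes with every elementary tensor $g\otimes a$, and hence with all of $C(K)\otimes A$. Therefore $\pi_\psi(f\otimes 1)\in\pi_\psi(C(K)\otimes A)'$, which by Schur's lemma equals $\mathbb{C}I$. Writing $\pi_\psi(f\otimes 1)=\chi(f)\,I$ defines a character $\chi:C(K)\to\mathbb{C}$, and since every character on $C(K)$ is a point evaluation, $\chi=\ev_x$ for a unique $x\in K$.

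Next I would define $\phi(a):=\langle\pi_\psi(1\otimes a)\xi_\psi,\xi_\psi\rangle$, which is visibly a state on $A$. The factorization falls out of
\[
\psi(f\otimes a)=\langle\pi_\psi(f\otimes 1)\pi_\psi(1\otimes a)\xi_\psi,\xi_\psi\rangle=f(x)\,\phi(a)=(\ev_x\otimes\phi)(f\otimes a),
\]
which extends by linearity and norm continuity to $\psi=\ev_x\otimes\phi$ on all of $C(K)\otimes A$. To see that $\phi$ is pure, consider the representation $\sigma:A\to B(H_\psi)$ given by $\sigma(a)=\pi_\psi(1\otimes a)$; the identity $\pi_\psi(f\otimes a)=f(x)\sigma(a)$ shows that $\sigma(A)$ and $\pi_\psi(C(K)\otimes A)$ generate the same C*-algebra of operators, so $\sigma$ is irreducible, and therefore $\phi$, as the vector state of $\xi_\psi$ under $\sigma$, is pure.

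The main obstacle is bookkeeping in the non-unital case, where $f\otimes 1$ must be interpreted via the unitization of $A$ or the multiplier algebra $M(C(K)\otimes A)$; this is routine. Conceptually, the whole argument hinges on Schur's lemma applied to the central subalgebra $C(K)\otimes 1$, which forces $\pi_\psi$ to factor through evaluation at some point of $K$.
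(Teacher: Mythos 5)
Your argument is correct, but it takes a genuinely different route from the paper. You run the standard representation-theoretic argument: since $\psi$ is pure its GNS representation $\pi_\psi$ is irreducible, the central copy $C(K)\otimes 1$ (viewed in the multiplier algebra when $A$ is non-unital) must then map into $\mathbb{C}I$ by Schur's lemma, this scalar-valued map is a character of $C(K)$ and hence $\ev_x$ for some $x\in K$, and the vector state $\phi(a)=\langle\pi_\psi(1\otimes a)\xi_\psi,\xi_\psi\rangle$ is pure because $\sigma(a)=\pi_\psi(1\otimes a)$ has the same operator range as the irreducible $\pi_\psi$. The paper instead argues entirely by convexity: it shows the set $F=\{\ev_x\otimes\phi : x\in K,\ \phi\in Q(A)\}$ is $w^*$-compact, proves via Hahn--Banach separation that $\overline{\conv(F)}^{w^*}=Q(C(K)\otimes A)$, and then invokes Milman's theorem to conclude that every extreme point of $Q(C(K)\otimes A)$ --- in particular every pure state --- already lies in $F$. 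Your approach is more structural, needs no separation argument, and immediately generalizes to the statement that irreducible representations of $C(K)\otimes A$ factor through a single fiber; its only bookkeeping cost is the multiplier/unitization step you flag, which is indeed routine (one should just note that the restriction of the resulting pure state back to the ideal is nonzero, hence pure of norm one). The paper's approach avoids GNS theory and Schur's lemma altogether, trading them for Krein--Milman-type machinery, and yields as a byproduct the density of $\conv(F)$ in the whole quasi-state space. Both proofs are complete and correct.
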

\begin{proof}   Consider the map
\begin{brown}
\begin{align*}
\Phi:  K \times Q(A)&\to Q(C(K) \otimes A),\text{given by}\\
 (x,\phi)& \mapsto \ev_x\otimes\phi.
\end{align*}
\end{brown}
It is easy to see that $\Phi$ is $w^*$-continuous. \jbchange{Since $Q(A)$ is $w^*$-compact}, the set $F=\Phi(K \times Q(A))$ is $w^*$-compact in
$Q(C(K) \otimes A)$.

We claim that the $w^*$-closure of $\mathrm{conv}(F)$ is $Q(C(K) \otimes A)$.  Indeed,  if we had a strict inclusion  $\overline{\mathrm{conv}(F)}^{w^*} \subsetneq Q(C(K) \otimes A)$  we could find some $\phi_0 \in Q(C(K) \otimes A)$, a self-adjoint element $a \in C(K) \otimes A$ and an $\alpha \in \mathbb{R}$ such that $\phi(a) \leq \alpha < \phi_0(a)$ for all $\phi \in F$.  If we think of $a$ as a function $a:K \rightarrow A$, the first inequality tells us that $\phi(a(x)) \leq \alpha$ for all $x\in K$, $\phi \in Q(A)$, which forces $a \leq \alpha 1$, contradicting $\phi_0(a)>\alpha$.

 Using the claim and Milman's Theorem, it follows that all pure states $\psi$ on $C(K) \otimes A$ (which are automatically extreme points in $Q(C(K) \otimes A))$) are in $F$.  Thus, any such $\psi$ can be written as $\psi=\ev_x \otimes \phi$ for some $x \in K$ and $\phi \in Q(A)$.  Since $\psi$ is pure, $\phi$ must be pure as well.
 \end{proof}

\begin{prop}
\label{pure 2}
 \jbchange{Let  $K$ be a compact Hausdorff space, $D$ be an abelian subalgebra of a $C^*$-algebra $A$ and $\phi$ be a pure state on $D$ with unique extension to a (necessarily pure) state $\psi$ on $A$.    Consider the inclusion $\mathbb{C} \otimes D \subset C(K) \otimes A$ and the pure state $1 \otimes \phi$ on $\mathbb{C} \otimes D$.  If $\eta$ is a pure state on $C(K) \otimes A$ that extends $1 \otimes \phi$, then $\eta=\ev_x \otimes \psi$ for some $x \in K$.}
\end{prop}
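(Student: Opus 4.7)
The plan is to reduce this proposition directly to Proposition~\ref{pure 1} combined with the uniqueness hypothesis on $\phi$. Since $\eta$ is a pure state on $C(K) \otimes A$, Proposition~\ref{pure 1} immediately yields a point $x \in K$ and a pure state $\phi'$ on $A$ such that
\[
\eta = \ev_x \otimes \phi'.
\]
So the only remaining task is to identify $\phi'$ with $\psi$.

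For this identification, I would use the hypothesis that $\eta$ extends $1 \otimes \phi$. Evaluating $\eta$ on an elementary tensor of the form $1 \otimes d$ with $d \in D$, one obtains on the one hand $\eta(1 \otimes d) = \phi(d)$ (because $\eta$ extends $1 \otimes \phi$), and on the other hand $\eta(1 \otimes d) = \ev_x(1)\,\phi'(d) = \phi'(d)$. Thus $\phi'\big|_D = \phi$, i.e., $\phi'$ is a state on $A$ that extends $\phi$.

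By the uniqueness part of the hypothesis, $\phi$ admits exactly one state extension to $A$, namely $\psi$. Therefore $\phi' = \psi$, giving $\eta = \ev_x \otimes \psi$, as required.

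There is essentially no obstacle here beyond invoking the right prior result: the argument is a one-line consequence of Proposition~\ref{pure 1} once one checks that restriction to $\mathbb{C} \otimes D$ recovers $\phi'\big|_D$. The only point requiring mild care is to ensure the pure state $\phi'$ supplied by Proposition~\ref{pure 1} is genuinely a state (not merely a quasi-state), but this is automatic since a pure state in the sense used there is a nonzero extreme point of $Q(A)$, hence of norm one.
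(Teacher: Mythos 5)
Your proof is correct and follows essentially the same route as the paper's: apply Proposition~\ref{pure 1} to write $\eta=\ev_x\otimes\phi'$, restrict to $\mathbb{C}\otimes D$ to see $\phi'\big|_D=\phi$, and invoke the uniqueness of the extension to conclude $\phi'=\psi$. No further comment is needed.
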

 \begin{proof}
 By Proposition~\ref{pure 1}, $\eta=\ev_x \otimes \xi$ for some pure state $\xi$ on $A$.  Restricting $\eta$ to $\mathbb{C} \otimes D$ we see that $\psi(a)=\eta(1 \otimes a)=\xi(a)$ for all $a \in D$. Thus by uniqueness $\xi=\psi$. \end{proof}


\section{``Abstract'' uniqueness}

This section is devoted to a uniqueness result (Theorem \ref{prop: main state}), which provides a generalization of a certain ``abstract'' uniqueness result found in \cite{NR13}. Its proof relies on the following.

 \begin{lem}
 \label{lem: state rep}
  Let $A$ be a $C^*$-algebra. Assume $M\subset A$ is an abelian $C^*$-subalgebra, and $\phi\in P(M)$ is a pure state that extends uniquely to a (necessarily pure) state $\psi$ on $A$. Let $\pi_\psi:A\to B(L^2(A,\psi))$ denote the
	GNS representation associated with $\psi$.
	\begin{enumerate}
	\item \label{weak state} If $\rho: A \rightarrow B(H)$ is a $*$-representation such that $\ker \rho\big|_M \subseteq \text{\rm ker}\,\phi$,  then there exists a net of vectors $(\xi_{\lambda})_{\lambda}$ in $H$, $\|\xi_\lambda \|=1$ such that
\begin{equation}
\psi(a)=\lim_\lambda \langle \rho(a)\xi_{\lambda} |\xi_{\lambda}\rangle,\,\,\forall\,a\in A.
\label{state rep a}
\end{equation}
 In particular, we have the inclusion:
\begin{equation}
\ker\rho\subset\ker\pi_\psi.
\label{state rep a ker}
\end{equation}
	\item\label{it ab} If $M$ is contained in the center of $A$, i.e. $ab=ba$, $\forall\,a\in A,\,b\in M$, then:
\begin{equation}
\pi_\psi(a)=\psi(a)I,\,\,\,\forall\,a\in A,
\label{state rep b}
\end{equation}
	so in particular $\psi$ is a $*$-homomorphism.
	\end{enumerate}
\end{lem}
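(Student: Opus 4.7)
The plan is to reduce both parts to facts about a pure state on the image $C^*$-algebra $\rho(A)\subseteq B(H)$, combining the unique extension hypothesis with a Hahn--Banach/Milman argument. For (a), the inclusion $\ker(\rho|_M)\subseteq\ker\phi$ lets $\phi$ descend to a well-defined state $\tilde\phi$ on $\rho(M)$, and Hahn--Banach produces an extension to a state $\sigma$ on $\rho(A)$. Then $\sigma\circ\rho$ is a state on $A$ that restricts to $\phi$ on $M$, so the unique extension hypothesis gives $\sigma\circ\rho=\psi$. Since $\rho\colon A\twoheadrightarrow\rho(A)$ is surjective and $\psi$ is pure, an easy convex-decomposition argument forces $\sigma$ itself to be pure on $\rho(A)$.

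The analytic heart of (a) is to show that $\sigma$ is a $w^*$-limit of restrictions $\omega_{\xi_\lambda}|_{\rho(A)}$ for a net of unit vectors $\xi_\lambda\in H$. Set $G=\{\omega_\xi|_{\rho(A)}:\|\xi\|=1\}\subseteq Q(\rho(A))$. A Hahn--Banach separation argument in the spirit of Proposition~\ref{pure 1}, combined with the identity $\sup_{\|\xi\|=1}\langle T\xi|\xi\rangle=\max\mathrm{spec}(T)$ for self-adjoint $T\in\rho(A)\subseteq B(H)$ (using spectral permanence of self-adjoint elements across $C^*$-subalgebras, so that the spectrum can be computed in either algebra), shows $\overline{\mathrm{conv}(G)}^{w^*}\supseteq S(\rho(A))$. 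Because pure states are extreme points of the quasi-state space, $\sigma$ is extreme in $\overline{\mathrm{conv}(G)}^{w^*}$, and Milman's theorem places $\sigma\in\overline{G}^{w^*}$, yielding the desired net and proving \eqref{state rep a}. For \eqref{state rep a ker}, if $a\in\ker\rho$ then $\rho(x^*a^*ax)=0$ for every $x\in A$, so $\psi(x^*a^*ax)=\lim_\lambda\langle\rho(x^*a^*ax)\xi_\lambda|\xi_\lambda\rangle=0$; the standard GNS identity $\|\pi_\psi(a)\pi_\psi(x)\Omega\|^2=\psi(x^*a^*ax)$ together with cyclicity of the GNS vector $\Omega$ then gives $\pi_\psi(a)=0$.

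For (b), the purity of $\psi$ makes $\pi_\psi$ irreducible, so $\pi_\psi(A)'=\mathbb{C}I$; since $M$ is central in $A$, each $\pi_\psi(m)$ lies in this commutant and is therefore scalar, and pairing with the cyclic vector identifies $\pi_\psi(m)=\phi(m)I$ for $m\in M$. Consequently, for every unit vector $\xi\in L^2(A,\psi)$ the state $a\mapsto\langle\pi_\psi(a)\xi|\xi\rangle$ restricts on $M$ to $\phi$, and by uniqueness of extension it must equal $\psi$ on all of $A$. Hence $\langle\pi_\psi(a)\xi|\xi\rangle=\psi(a)$ is independent of the unit vector $\xi$, which by polarization forces $\pi_\psi(a)=\psi(a)I$; multiplicativity of $\psi$ then follows from that of $\pi_\psi$. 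The subtlest point in executing the plan is in (a): one must verify that the chain of extensions $\phi\to\tilde\phi\to\sigma$ stays within the states, and that the Milman step delivers genuine unit vectors rather than mere quasi-states, for which the observation that pure states remain extreme in the full quasi-state space is essential.
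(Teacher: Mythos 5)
Your proposal is correct, but part (a) takes a genuinely different route from the paper's. The paper works entirely upstairs in $A$: identifying $M=C_0(X)$ and $\phi=\ev_x$, it uses the hypothesis $\ker\rho\big|_M\subseteq\ker\phi$ only to conclude $\|\rho(f)\|=1$ for functions $f$ peaking at $x$, explicitly manufactures almost-invariant unit vectors $v_{(f,\epsilon)}$, extracts a $w^*$-convergent subnet of the corresponding vector states, and identifies the limit with $\psi$ by unique extension alone --- purity of $\psi$ is never needed in (a). You instead push $\phi$ forward to a state on $\rho(M)$ (using the kernel hypothesis), extend by Hahn--Banach to $\sigma$ on $\rho(A)$, identify $\sigma\circ\rho=\psi$ by uniqueness, deduce purity of $\sigma$, and then invoke the classical fact (separation plus Milman, as in Proposition~\ref{pure 1}) that a pure state of a concretely represented $C^*$-algebra is a $w^*$-limit of vector states. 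Your route is more modular but leans on purity and on a heavier density theorem; the paper's is more elementary and self-contained. One technical wrinkle in your separation step: from $T\le\alpha I_H$ you cannot immediately conclude $\sigma_0(T)\le\alpha$ by computing the spectrum of $T$ inside $\rho(A)$ when $\alpha<0$ and $I_H\notin\rho(A)$, since the subalgebra spectrum then picks up $0$; the clean fix is to extend $\sigma_0$ to a state on $B(H)$ and apply $T\le\alpha I_H$ there (the paper's Proposition~\ref{pure 1} avoids this because $0$ lies in its separating set, forcing $\alpha\ge 0$). For (b) the two arguments are essentially the same: you obtain $\pi_\psi(M)\subseteq\C I$ from irreducibility of $\pi_\psi$, while the paper derives it from multiplicativity of $\psi$ on $C^*(M\cup\{a\})$, and both conclude by observing that every vector state of $\pi_\psi$ restricts to $\phi$ on $M$ and hence equals $\psi$ by uniqueness. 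The kernel inclusion \eqref{state rep a ker} is proved identically in both.
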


 \begin{proof}
Identify $M=C_0(X)$ and $\phi=\ev_x$, for some $x\in X$, so $\text{ker}\,\phi=C^x_0(X)$.

\eqref{weak state}
 Fix $f \in C_0(X)$ with $0 \leq f \leq 1$ and $f(x)=1$.  Since $\|(f\mod C_0^{x}(X)) \|=\|f\| = 1$,
it follows that
\[
\|f\| \geq \|(f \mod \ker \rho) \| = \|(f\mod C_0^{x}(X))\|=1,
\]
so $\|\rho(f)\|=1$.

 In particular, for every $\epsilon \in (0,1)$, there is some vector $v=v_{(f,\epsilon)} \in H$ with $\|v_{(f,\epsilon)}\|=1$ and $1-\epsilon^2 \leq \langle \rho(f)v | v \rangle \leq 1$.

 Consider now $(v_{(f,\epsilon)})$ as a net, where $f$ decreases to $\delta_{x}$ and $\epsilon$ decreases to $0$.  Since we have $\langle(I-\rho(f))v_{(f,\epsilon)} | v_{(f,\epsilon)}\rangle \leq \epsilon^2$, we also have $\|(I-\rho(f))v_{(f,\epsilon)}\| < \epsilon$.  By noting that $\displaystyle \lim_{(f,\epsilon)} \|fg-g(x)f\|=0$  for all $g \in C_{0}(X)$, it follows immediately that\[ \lim_{(f,\epsilon)} \langle \rho(g)v_{(f,\epsilon)} | v_{(f, \epsilon)} \rangle = g(x),
 \,\,\forall\,
g \in C_{0}(X).
 \]

Consider the states $\omega_{(f, \epsilon)} \in S(A)$ given by $\omega_{(f, \epsilon)}(a)=\langle \rho(a)v_{(f, \epsilon)} | v_{(f, \epsilon)}\rangle$ and extract a $w^*$-convergent subnet $\omega_{(f_\lambda,\epsilon_\lambda)} \to \omega \in Q(A)$.  In other words, if we let $\xi_{\lambda}=v_{(f_\lambda, \epsilon_\lambda)}$,
we have $\omega(a)=\lim_\lambda\langle\rho(a)\xi_\lambda|\xi_\lambda\rangle$. Since $\omega(g)=g(x)=\ev_{x}(g)=\phi(g)$ for all $g \in C_{0}(X)$, it follows that $\omega=\psi$, so \eqref{state rep a} holds.

To prove the inclusion \eqref{state rep a ker},
let $\eta\in L^2(A,\psi)$ denote the standard cyclic vector for $\pi_\psi$, so that $\pi_\psi(A)\eta$ is dense in $L^2(A,\psi)$, and
\begin{equation}
\langle\pi_\psi(a)\pi_\psi(a^{}_1)\eta|\pi_\psi(a^{}_2)\eta\rangle=\psi(a^*_2aa^{}_1),
\,\,\forall\,a,a^{}_1,a^{}_2\in A.
\label{GNS eta}
\end{equation}
Observe now that, if we start with some element $a\in\ker\rho$, then by \eqref{state rep a} we immediately get
$$\psi(a^*_2aa^{}_1)=\lim_\lambda\langle\rho(a^*_2aa^{}_1)\xi_\lambda|\xi_\lambda\rangle=0,
\,\,\forall\,a^{}_1,a^{}_2\in A,$$
which by \eqref{GNS eta} and the density of $\pi_\psi(A)\eta$ in $L^2(A,\psi)$ forces $\pi_\psi(a)=0$.

\eqref{it ab} Assume now that $M$ is contained in the center of $A$. Observe first that, for any $a=a^*\in A$, the subalgebra
$M(a)=C^*(\{a\}\cup M)\subset A$ is abelian, and by the unique extension, $\phi$ extends uniquely to a pure state on
$M(a)$, namely $\psi\big|_{M(a)}$; in particular it follows that $\psi$ is multiplicative on $M(a)$, so
\begin{equation}
\psi(ab)=\psi(ba)=\psi(a)\phi(b),\,\,\forall\,b\in M.
\label{psi-mult}
\end{equation}
Since \eqref{psi-mult} holds for all self-adjoint elements, it also holds for {\em all\/} $a\in A$.

Note that, for $b\in M$, using \eqref{psi-mult} and \eqref{GNS eta}, we get
\[
\langle\pi_\psi(b)\pi_\psi(a^{}_1)\eta|\pi_\psi(a^{}_2)\eta\rangle=
\phi(b)\langle\pi_\psi(a^{}_1)\eta|\pi_\psi(a^{}_2)\eta\rangle,\,\,\forall\,a^{}_1,a^{}_2\in A,
\]
so by the density of $\pi_\psi(A)\eta$ in $L^2(A,\psi)$, it follows that
\begin{equation}
\pi_\psi(b)=\phi(b)I,\,\,\forall\,b\in M;
\label{lem b}\end{equation}
in other words, \eqref{state rep b} holds, if $a\in M$.

To finish observe that it suffices to show $\pi_\psi(a)$ is a scalar multiple of the identity for all $a\in A$. (For if $\pi_\psi(a)=\lambda_aI$ then $a\mapsto \lambda_a$ is a state on $A$ which extends $\phi$ by \eqref{lem b}. So by uniqueness $\psi(a)=\lambda_a$ for all $a\in A$.)    In fact it suffices to show for $a_0\in A$ positive. \jbchange{It is enough to show the numerical range, $\{\langle\pi_\psi(a_0)\eta|\eta\rangle|\|\eta\|=1\}$, of $\pi_\psi(a_0)$ is a singleton.  By contradiction assume there exist two vectors $\eta_1,\eta_2\in L^2(A,\psi)$, with $\|\eta_1\|=\|\eta_2\|=1$, such that $\langle\pi_\psi(a_0)\eta_1|\eta_1\rangle\neq\langle\pi_\psi(a_0)\eta_2|\eta_2\rangle.$ But then by \eqref{lem b}, the functionals $\psi_i$ given by $\psi_i(\cdot)=\langle\pi_\psi(\cdot)\eta_i|\eta_i\rangle$ for $i=1,2$, are  two  different states on $A$ extending $\phi$. }
\end{proof}

\begin{defn}
\label{defn: jointly faithful}
We say a collection $W=\{\rho_i:A\to B_i\}_{i\in I}$ of positive linear maps on a $C^*$-algebra $A$ (the $B_i$'s are also assumed to be $C^*$-algebras) is \emph{jointly faithful} if whenever $a\in A$  is such that $\rho_i(a^*a)=0$, for all
$i\in I$, it follows that $a=0$.
\end{defn}

Using this terminology, we have the following ``abstract'' uniqueness result. (In the applications we have in mind in this article, only part B of the Theorem will be used.)

\begin{thm}
\label{prop: main state}
Let $A$ be a $C^*$-algebra, and let $M\subset A$ be an abelian $C^*$-subalgebra. Suppose $F$ is a nonempty subset of $P(M)$ is such that
every $\phi\in F$ extends uniquely to a state $\phi^A$ on $A$ (which is necessarily pure).

\begin{enumerate}[{\rm I.}]
\item\label{GNS faithful} If the family of GNS representations $\{\pi_{\phi^A}\}_{\phi\in F}$ is faithful on $A$, then the following uniqueness statement is true.
\begin{enumerate}
\item[{\rm (i)}] A $*$-homomorphism $\rho:A\to B$ is injective, if and only if its restriction $\rho\big|_M$ is injective.
\end{enumerate}

\item\label{joint faithful} If the family $\{\phi^A\}_{\phi\in F}$
 is jointly faithful on $A$, then, in addition to statement {\rm (i)} above, the following are also true.
\begin{enumerate}
\item[{\rm (ii)}] The commutant of $M$ in $A$, that is, the set
$$M'=\{a\in A\,|\,ab=ba,\,\,\forall\,b\in M\},$$
is a maximal abelian subalgebra (MASA) in $A$.
\item[{\rm (iii)}] For any intermediary abelian $C^*$-subalgebra $M\subset N\subset A$, the set $\{\phi^A\big|_{N}\}_{\phi\in F}$ is $w^*$-dense in $P(N)$.
\end{enumerate}
\end{enumerate}
\end{thm}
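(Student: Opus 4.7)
The plan is to handle the three statements in sequence, using Lemma~\ref{lem: state rep} as the main engine: part (a) of that lemma drives statement (i), and part (b) underlies both (ii) and (iii). I would start with the forward implication of (i), which is trivial, and focus on the converse: assuming $\rho|_M$ is injective, take any $\phi\in F$, observe that $\ker\rho\big|_M=0\subseteq\ker\phi$, invoke Lemma~\ref{lem: state rep}(a) to get $\ker\rho\subseteq\ker\pi_{\phi^A}$, and intersect over $\phi\in F$ using the hypothesis of Part~\ref{GNS faithful} to conclude $\ker\rho=0$.

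For Part~\ref{joint faithful}, I would first reduce (i) to Part~\ref{GNS faithful} by noting that joint faithfulness of the states $\{\phi^A\}_{\phi\in F}$ implies joint faithfulness of the GNS representations $\{\pi_{\phi^A}\}_{\phi\in F}$, since $\pi_{\phi^A}(a)=0$ forces $\phi^A(a^*a)=\|\pi_{\phi^A}(a)\eta\|^2=0$. The crucial observation for (ii) and (iii) is that the unique-extension property passes to any intermediate $C^*$-subalgebra $M\subseteq N\subseteq A$: if $\sigma$ is a state on $N$ restricting to $\phi$ on $M$, then Hahn-Banach extends $\sigma$ to a state on $A$ extending $\phi$, which by uniqueness must be $\phi^A$, so $\sigma=\phi^A|_N$. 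Consequently, whenever $M$ lies in the center of $N$, applying Lemma~\ref{lem: state rep}(b) with $N$ in place of $A$ tells us $\phi^A|_N$ is multiplicative on $N$.

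To establish (ii), I would take $N=M'$, which is a $C^*$-subalgebra containing $M$ in its center. By the transfer principle above, each $\phi^A|_{M'}$ is multiplicative, so for any $a,b\in M'$ the element $c=ab-ba\in M'$ satisfies $\phi^A(c^*c)=|\phi^A(c)|^2=0$ for every $\phi\in F$. Joint faithfulness forces $c=0$, so $M'$ is abelian; its maximality follows because any element of $A$ commuting with $M'$ commutes in particular with $M$, and hence lies in $M'$ itself.

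For (iii), each $\phi^A|_N$ is a character on the abelian algebra $N$, hence identifies with a point of the spectrum $\hat N\simeq P(N)$. If $\{\phi^A|_N\}_{\phi\in F}$ were not $w^*$-dense in $P(N)$, its complement would contain a nonempty open set $U\subseteq\hat N$; picking a nonzero positive $f\in N\cong C_0(\hat N)$ supported in $U$ gives $\phi^A(f)=0$ for every $\phi\in F$, and writing $f=g^*g$ with $g=f^{1/2}$ contradicts joint faithfulness. The main subtlety I anticipate is simply bookkeeping the unique-extension hypothesis through the various intermediate subalgebras and applying Lemma~\ref{lem: state rep}(b) in each such context; once that transfer principle is in hand, each of (i), (ii), (iii) reduces to a short calculation.
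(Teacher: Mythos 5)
Your proposal is correct and follows essentially the same route as the paper: Lemma~\ref{lem: state rep}(a) for statement (i), the observation that joint faithfulness of states implies joint faithfulness of the GNS representations, the transfer of the unique-extension and joint-faithfulness hypotheses to intermediate subalgebras containing $M$, and Lemma~\ref{lem: state rep}(b) to get multiplicativity on $M'$ and on $N$. The only (cosmetic) difference is in (ii), where the paper packages the argument as an injective direct-sum GNS representation with scalar-diagonal, hence abelian, image, while you apply joint faithfulness directly to the commutators $ab-ba$; these are the same argument, and your explicit Hahn--Banach justification of the transfer principle is a welcome elaboration of a step the paper only asserts.
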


\begin{proof}
\eqref{GNS faithful} If $\rho\big|_M$ is injective, then for every $\phi\in F$, we have the inclusions
$\{0\}=\ker(\rho\big|_M)\subset\ker(\phi)$. Thus
by Lemma \ref{lem: state rep} we get
\begin{equation}
\ker(\rho)\subset\bigcap_{\phi\in F}\ker(\pi_{\phi^A})=\{0\}.
\label{thm; main state a}
\end{equation}

\eqref{joint faithful}  \jbchange{ First note that Hypothesis~\eqref{joint faithful} implies hypothesis~\eqref{GNS faithful} and hence statement (i). Indeed, if $\psi$ is a state on $A$  and $\pi_\psi$ is the associated GNS representation then $\pi_\psi(a)=0$ implies $\psi(a^*a)=0$.  So if $\{\phi^A\}_{\phi\in F}$ is jointly faithful on $A$  then $\{\pi_{\phi^A}\}_{\phi\in F}$ is too. }

To show (ii), note that  the unique extension property and joint faithfullness of state extensions pass to subalgebras containing $M$. Hence we can assume $A=M'$. (The reader is cautioned that this reduction  cannot be performed under hypothesis \eqref{GNS faithful}. In particular, statements (ii) and (iii) do not follow if \emph{only} hypothesis \eqref{GNS faithful} is assumed.) Under this assumption we show $A$ is abelian. Consider the direct sum representation
\[
\pi=\bigoplus_{\phi\in F}\pi_{\phi^{A}}:A\to
B\big(\bigoplus_{\phi\in F}L^2(A,\phi^{A})\big).
\]
Since  the family $\{\pi_{\phi^{A}}\}_{\phi\in F}$ is jointly faithful, \eqref{GNS faithful} shows $\pi$ is injective.  Now $A=M'$ so Lemma~\ref{lem: state rep} \eqref{it ab} shows that
\[
\pi(a)=\bigoplus_{\phi\in F}\phi^{A}(a)I_{L^2(A,\phi^{A})},\,\,\forall\,a\in A.
\]
 So $\pi(A)$ is abelian and since $\pi$ is injective, $A$ is abelian too.

To prove (iii),  as in part (ii), we can assume with out loss of generality that  $A=N$  is abelian. Then $A=C_0(X)$ for some locally compact Hausdorff space $X$.   By Lemma~\ref{lem: state rep}  \eqref{it ab} the set $S=\{\phi^A\}_{\phi\in F}$ consists of $*$-homomorphisms so we can identify $S$ with a subset of $X$.  By the hypothesis \eqref{joint faithful}, $\{\text{ev}_s\}_{s\in S}$ is faithful on $C_0(X)$, so $S$ must be dense in $X=P(A)$.
\end{proof}

\section{The cycline subalgebra}

In this section we study a special class of standard generators for $C^*(\Lambda)$, which will are used in the construction a distinguished $C^*$-subalgebra $\mathscr{M}\subset C^*(\Lambda)$.

As in Subsection 2.2, for every $\alpha\in \Lambda$ we denote the projection $S^{}_\alpha S_\alpha^*\in
C^*(\Lambda)$ by $P^{}_\alpha$.

Observe that $P_{\alpha}^{}=P_{\beta}^{}$ if and only if $Z(\alpha)=Z(\beta)$ and that
\[
P_{\alpha\gamma}^{}S_\alpha^{} S_\beta^* P^{}_{\beta\eta}=S_{\alpha\gamma}^{} S_\gamma^* S^{}_\eta S_{\beta\eta}^*.
\]
In particular if $d(\gamma)=d(\eta)$ then
\begin{equation}
\label{proj form}
P^{}_{\alpha\gamma}S^{}_\alpha S_\beta^* P^{}_{\beta\eta}=\begin{cases} S^{}_{\alpha\gamma}S_{\beta\gamma}^* & \text{if~} \gamma=\eta\\ 0 &\text{otherwise}
\end{cases}
\end{equation}

\begin{prop}
\label{prop: supernormal}
For $(\alpha,\beta)\in \Lambda\times \Lambda$ with $s(\alpha)=s(\beta)$, the following are equivalent:
\begin{enumerate}
\item \label{it: supernormal 1} $P^{}_{\alpha\gamma}=P^{}_{\beta\gamma}$ for all $\gamma\in s(\alpha)\Lambda$.
\item \label{it: supernormal 2} $S^{}_\alpha S_\beta^*$ is normal and commutes with $\mathscr{D}=C^*(\{P_\mu \,  | \, \mu\in \Lambda\})$.
\jbchange{\item \label{it: supernormal 3} $\alpha y=\beta y$ for all $y\in s(\alpha) \Lambda^\infty$.}
\end{enumerate}
\end{prop}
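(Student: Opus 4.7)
The plan is to prove (1) $\Leftrightarrow$ (3) via the spectral picture $\mathscr{D}\cong C_0(\Lambda^\infty)$ (under which $P_\mu$ corresponds to the indicator $\chi_{Z(\mu)}$), and to prove (1) $\Leftrightarrow$ (2) by direct manipulation of the Cuntz-Krieger relations. The main workhorse for (1) $\Leftrightarrow$ (2) will be the decomposition
\[
S_\alpha S_\beta^* \;=\; \sum_{\gamma\in s(\alpha)\Lambda^n}S_{\alpha\gamma}S_{\beta\gamma}^*, \qquad \text{valid for any } n\in\N^k,
\]
obtained by inserting the (CK2) identity $S_{s(\alpha)}=\sum_{\gamma\in s(\alpha)\Lambda^n}S_\gamma S_\gamma^*$ between $S_\alpha$ and $S_\beta^*$.

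For (1) $\Leftrightarrow$ (3), the spectral picture translates (1) into the equalities $Z(\alpha\gamma)=Z(\beta\gamma)$ for all $\gamma\in s(\alpha)\Lambda$. For (1) $\Rightarrow$ (3), given $y\in s(\alpha)\Lambda^\infty$, I would observe that the infinite path $\alpha y$ lies in every $Z(\alpha\cdot y(0,n))=Z(\beta\cdot y(0,n))$, which forces $(\alpha y)(0,d(\beta)+n)=\beta\cdot y(0,n)$ for every $n$; unique factorization then delivers $\alpha y=\beta y$. For (3) $\Rightarrow$ (1), any $x\in Z(\alpha\gamma)$ may be written as $\alpha\gamma z$ for some $z\in s(\gamma)\Lambda^\infty$, and applying (3) to $y=\gamma z\in s(\alpha)\Lambda^\infty$ rewrites $x=\alpha y=\beta y=\beta\gamma z\in Z(\beta\gamma)$; the reverse inclusion is symmetric in $\alpha,\beta$.

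For (1) $\Rightarrow$ (2), the $\gamma=s(\alpha)$ case of (1) yields $P_\alpha=P_\beta$, whereupon the standard computations $(S_\alpha S_\beta^*)^*(S_\alpha S_\beta^*)=S_\beta P_{s(\alpha)}S_\beta^*=P_\beta$ and $(S_\alpha S_\beta^*)(S_\alpha S_\beta^*)^*=P_\alpha$ give normality. To check commutation with $\mathscr{D}$ it suffices to commute with each generator $P_\mu$: choosing $n$ componentwise at least $d(\mu)-d(\alpha)$ and $d(\mu)-d(\beta)$ and expanding $P_\mu\cdot S_\alpha S_\beta^*$ and $S_\alpha S_\beta^*\cdot P_\mu$ via the decomposition, both reduce to the sum of $S_{\alpha\gamma}S_{\beta\gamma}^*$ over those $\gamma\in s(\alpha)\Lambda^n$ for which $Z(\alpha\gamma)\subset Z(\mu)$, respectively $Z(\beta\gamma)\subset Z(\mu)$; hypothesis (1) makes these index sets coincide.

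For (2) $\Rightarrow$ (1), normality already delivers the $\gamma=s(\alpha)$ case. For a general $\gamma\in s(\alpha)\Lambda$, I would apply the decomposition at level $n=d(\gamma)$ and use (CK1) to kill all but one term, obtaining $P_{\alpha\gamma}S_\alpha S_\beta^*=S_{\alpha\gamma}S_{\beta\gamma}^*$; a parallel computation, using the (CK1)-derived identity $S_\beta^*P_{\beta\gamma}=S_\gamma S_{\beta\gamma}^*$, gives $S_\alpha S_\beta^*P_{\beta\gamma}=S_{\alpha\gamma}S_{\beta\gamma}^*$. Invoking commutation with $P_{\beta\gamma}$, we deduce $P_{\alpha\gamma}S_\alpha S_\beta^*=P_{\beta\gamma}S_\alpha S_\beta^*$; multiplying on the right by $S_\beta S_\alpha^*$ and using $S_\alpha S_\beta^*S_\beta S_\alpha^*=P_\alpha=P_\beta$ together with $P_{\alpha\gamma}\leq P_\alpha$ and $P_{\beta\gamma}\leq P_\beta$ collapses the identity to $P_{\alpha\gamma}=P_{\beta\gamma}$. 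The only real obstacle will be careful bookkeeping of multi-indices and source/range compatibilities in these Cuntz-Krieger expansions; no deeper conceptual difficulty seems to arise.
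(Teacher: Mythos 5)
Your proposal is correct and follows essentially the same route as the paper: (1)$\Leftrightarrow$(2) by direct Cuntz--Krieger manipulations (your sum decomposition $S_\alpha S_\beta^*=\sum_{\gamma\in s(\alpha)\Lambda^n}S_{\alpha\gamma}S_{\beta\gamma}^*$ is just a repackaging of the paper's case analysis on $\mu$), and (1)$\Leftrightarrow$(3) via the cylinder sets $Z(\alpha\gamma)=Z(\beta\gamma)$. The only real divergence is in (2)$\Rightarrow$(1), where the paper observes that the compression $P_{\alpha\gamma}S_\alpha S_\beta^*=S_{\alpha\gamma}S_{\beta\gamma}^*$ is itself normal and compares its two ``absolute values,'' while you right-multiply by $S_\beta S_\alpha^*$ and use $P_{\alpha\gamma}\le P_\alpha=P_\beta\ge P_{\beta\gamma}$; both collapses are correct and equally elementary.
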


\begin{proof}
 $\eqref{it: supernormal 1}\Rightarrow \eqref{it: supernormal 2}$ Consider $\mu\in \Lambda$, and we prove that
$S^{}_\alpha S_\beta^*$ commutes with $P^{}_\mu$.  Since $P^{}_\mu=\sum_{\eta\in s(\mu)\Lambda^n} P^{}_{\mu\eta}$, without loss of generality we can assume $d(\mu)> d(\alpha)+d(\beta)$. \jbchange{There are two cases.  Case $1$: suppose $\mu=\alpha\gamma$ or $\beta\gamma$ for some $\gamma$.  By part~\eqref{it: supernormal 1} $P_{\alpha\gamma}=P_{\beta\gamma}$, so}
 \[
 P^{}_{\mu} S^{}_{\alpha} S_{\beta}^*=P_{\alpha \gamma}S^{}_{\alpha}S^{}_{\beta} =S^{}_{\alpha\gamma}S_{\beta\gamma}^*=S^{}_{\alpha} S_{\beta}^* P^{}_{\beta\gamma}=S^{}_{\alpha}S_{\beta}^* P^{}_{\mu}.
 \]

For case $2$, assume $\mu$ is neither $ \beta\gamma$ nor $\alpha\gamma$ for any $\gamma\in \Lambda$. Then $P^{}_\mu S^{}_\alpha S_\beta^*=0=S^{}_\alpha S_\beta^* P^{}_\mu$.
 Thus   $S^{}_\alpha S_\beta^*$ commutes with $\mathscr{D}$.  Further  $(S^{}_\alpha S_\beta^*)^*=S^{}_\beta S_\alpha^*$, so
 \[
  S^{}_\alpha S_\beta^*(S^{}_\alpha S_\beta^*)^*=P^{}_\alpha=P^{}_\beta=(S^{}_\alpha S_\beta^*)^*S^{}_\alpha S_\beta^*.
  \]
  Therefore $S^{}_\alpha S_\beta^*$ is normal, giving \eqref{it: supernormal 2}.

$\eqref{it: supernormal 2}\Rightarrow \eqref{it: supernormal 1}$  Let $\gamma\in s(\alpha) \Lambda$.   Since $S^{}_{\alpha}S_{\beta}^*$ is normal and commutes with  the projection $P^{}_{\alpha\gamma}$  it \nagychange{follows that} $P^{}_{\alpha\gamma} S^{}_\alpha S_\beta^*=S^{}_{\alpha\gamma}S_{\beta\gamma}^*$ is normal too.  Now
\[
P^{}_{\alpha\gamma}=S^{}_{\alpha\gamma}S_{\beta\gamma}^*(S^{}_{\alpha\gamma}S_{\beta\gamma}^*)^*=(S^{}_{\alpha\gamma}S_{\beta\gamma}^*)^*S^{}_{\alpha\gamma}S_{\beta\gamma}^*=P^{}_{\beta\gamma}
\]
giving \eqref{it: supernormal 1}.

\begin{brown}
$\eqref{it: supernormal 1}\Rightarrow\eqref{it: supernormal 3}$. Pick $y\in s(\alpha)\Lambda^\infty$.  By \eqref{it: supernormal 1}, $P_{\alpha (y(0,n))}=P_{\beta (y(0,n))}$ for all $n$: thus $Z(\alpha(y(0,n)))=Z(\beta(y(0,n)))$ for all $n$.  Therefore
\[
\{\alpha y\}=\bigcap_{n\in Z^k}Z(\alpha(y(0,n))=\bigcap_{n\in Z^k}Z(\beta(y(0,n)))=\{\beta y\}
\]
and so $\alpha y=\beta y$ giving \eqref{it: supernormal 3}.

$\eqref{it: supernormal 3}\Rightarrow\eqref{it: supernormal 1}$. Let $\gamma\in s(\alpha)\Lambda$ be given. Since $\gamma z\in s(\alpha) \Lambda^\infty$ for all $z\in s(\gamma)\Lambda^\infty$, by \eqref{it: supernormal 3}, $\alpha\gamma z=\beta \gamma z$ for all $z\in s(\gamma)\Lambda^\infty$.  Thus $Z(\alpha\gamma)=Z(\beta\gamma)$ and so $P_{\alpha\gamma}=P_{\beta\gamma}$ giving \eqref{it: supernormal 1}.
\end{brown}
\end{proof}

\begin{brown}
\begin{rmk}
In \cite{CKSS}, Carlsen-Kang-Shotwell and Sims say $\alpha,\beta\in \Lambda$ are equivalent if they satisfy \eqref{it: supernormal 3} in Proposition~\ref{prop: supernormal}.  They observe that this is indeed an equivalence relation on paths.
\end{rmk}
\end{brown}

\begin{defn}
\label{def: cycline}
\jbchange{A \emph{cycline pair} is a pair $(\alpha,\beta)\in\Lambda\times\Lambda$ with $s(\alpha)=s(\beta)$ satisfying the equivalent conditions of Proposition~\ref{prop: supernormal}. }
\end{defn}

Observe that $(\alpha,\beta)$ is cycline if and only if $(\beta, \alpha)$ is.  In this case, call the element $S^{}_{\alpha}S_{\beta}^*$ a \emph{cycline generator}. The $C^*$-subalgebra
\[
\mathscr{M}=C^*\{S^{}_\alpha S_\beta^*\,|\,(\alpha,\beta)\text{ cycline}\}\subset C^*(\Lambda)
\]
will be referred to as the {\em cycline subalgebra of $C^*(\Lambda)$.}
If we denote by $\mathscr{D}'$ the commutant of $\mathscr{D}$ in $C^*(\Lambda)$, that is
\[
\mathscr{D}'=\{a\in C^*(\Lambda)\,|\,aP^{}_\mu=P^{}_\mu a,\,\,\forall\,\mu\in\Lambda\},
\]
then by construction we have the inclusions $\mathscr{D}\subset\mathscr{M}\subset\mathscr{D}'$.

\sout{I moved Remark 6.3 here:}
\begin{rmk}
\label{rmk: ap}
\nagychangeII{
If $\Lambda$ is aperiodic, then $\mathscr{D}$ is a MASA. Therefore $\nagychangeII{\mathscr{D}'}=\mathscr{D}$ and hence $\mathscr{D}=\mathscr{M}$.}
\end{rmk}

\begin{rmk}
\label{rmk: Mab}
\nagychangeII{
By a direct computation on the generators, it can be shown that $\mathscr{M}$ is abelian.
Alternatively, this property also follows from the more general statement that $\mathscr{D}'$ is abelian, which will be proved in Section 7. (See Remark~\ref{rmk: ab} and Proposition~\ref{prop: com}.)}
\end{rmk}

\begin{rmk}
\begin{brown}
The subalgebra $\mathscr{M}$ is equal to $\cspn\{S^{}_\alpha S_\beta^*\,|\,(\alpha,\beta)\text{ cycline}\}$.  Indeed, for   $(\alpha,\beta), (\mu,\nu)$ then there exists an $n\in \N^k$ with

\begin{equation}
\label{eq spn}
S^{}_\alpha S_\beta^* S^{}_\mu S^{*}_\nu=\sum_{\substack{\gamma,\eta\in \Lambda\\  \beta\gamma=\mu\eta \\ d(\beta\gamma)=n}} S^{}_{\alpha\gamma}S_{\nu\eta}^*.
\end{equation}
If  $(\alpha,\beta), (\mu,\nu)$ are cycline and $\lambda\in s(\gamma)\Lambda$  then
\[
P_{\nu\eta\lambda}= P_{\mu\eta\lambda}=P_{\beta\gamma\lambda}=P_{\alpha\gamma\lambda}.
\]
The first and last equalities hold because $(\mu,\nu)$ and $ (\alpha,\beta)$  are cycline and the middle equality holds because $\mu\eta\lambda=\beta\gamma\lambda$.  Thus for  $(\alpha,\beta), (\mu,\nu)$  cycline each term on the right-hand side of \eqref{eq spn} is given by a cycline pair.
\end{brown}
\end{rmk}

\begin{prop}
\label{prop: no aperiodic}
\jbchange{Let $\Lambda$ be a $k$-graph, and $(\alpha,\beta)\in \Lambda\times \Lambda$   such that $s(\alpha)=s(\beta)$.}
\begin{enumerate}
\item\label{noa 1}  If $\alpha\neq \beta$ and there exists an aperiodic path $x\in s(\alpha)\Lambda^\infty$, then $(\alpha,\beta)$ is not a cycline pair.
\item\label{noa 2}  If $\Lambda$ is aperiodic then $(\alpha,\beta)$ is a cycline pair if and only if $\alpha=\beta$.
\end{enumerate}
\end{prop}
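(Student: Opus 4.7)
The plan is to use the equivalent characterization in Proposition~\ref{prop: supernormal}~\eqref{it: supernormal 3}: a cycline pair $(\alpha,\beta)$ must satisfy $\alpha y=\beta y$ for all $y\in s(\alpha)\Lambda^\infty$. The paper already notes in the preliminaries that whenever $\alpha\neq\beta$ and $\alpha y=\beta y$ for some $y\in\Lambda^\infty$, the path $\alpha y$ is periodic. To make both parts go through cleanly, I would first upgrade this observation slightly, to the stronger statement that $y$ itself is periodic. The argument is short: if $\alpha y=\beta y$ then in particular $d(\alpha)\neq d(\beta)$ (otherwise unique factorization forces $\alpha=\beta$); setting $p=d(\alpha)$, $q=d(\beta)$ and $m=p\vee q\in\N^k$, applying $\sigma^m$ to both sides of $\alpha y=\beta y$ yields
\[
\sigma^{m-p}(y)=\sigma^m(\alpha y)=\sigma^m(\beta y)=\sigma^{m-q}(y),
\]
with $m-p\neq m-q$, so $y$ is periodic.

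With this in hand, part~\eqref{noa 1} is immediate. Assuming $(\alpha,\beta)$ were cycline, Proposition~\ref{prop: supernormal}~\eqref{it: supernormal 3} applied to the aperiodic path $x\in s(\alpha)\Lambda^\infty$ would give $\alpha x=\beta x$, and then the preceding paragraph (using $\alpha\neq\beta$) would force $x$ to be periodic, contradicting the aperiodicity of $x$. Hence $(\alpha,\beta)$ cannot be cycline.

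Part~\eqref{noa 2} then follows by combining part~\eqref{noa 1} with the definition of an aperiodic $k$-graph. One direction is trivial: if $\alpha=\beta$, then $(\alpha,\alpha)$ satisfies $\alpha y=\alpha y$ for every $y\in s(\alpha)\Lambda^\infty$, hence is cycline. For the converse, if $\Lambda$ is aperiodic then $s(\alpha)\Lambda^\infty$ contains an aperiodic path by the definition of aperiodicity, so part~\eqref{noa 1} applies to rule out $\alpha\neq\beta$ whenever $(\alpha,\beta)$ is cycline.

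There is no real obstacle here; the only subtle point is being careful about comparing the degrees $d(\alpha)$ and $d(\beta)$ in $\N^k$ (they may be incomparable, which is why the meet/join argument using $m=d(\alpha)\vee d(\beta)$ is preferable to a one-dimensional shift argument). The whole proposition is essentially a direct translation of the preliminary remark about periodicity into the language of cycline pairs via Proposition~\ref{prop: supernormal}.
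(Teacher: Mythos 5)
Your proof is correct. Both arguments ultimately rest on the same observation recorded in the preliminaries --- that $\alpha y=\beta y$ with $\alpha\neq\beta$ forces periodicity --- but the two proofs diverge in how they convert that into the failure of the cycline condition. The paper argues that $\alpha x$ and $\beta x$ inherit aperiodicity from $x$, concludes $\alpha x\neq\beta x$, and then exhibits an explicit $\gamma=x(0,n)$ for which $P^{}_{\alpha\gamma}\neq P^{}_{\beta\gamma}$ by a computation with the generators $S^{}_\mu$, i.e., it verifies the failure of condition \eqref{it: supernormal 1} of Proposition~\ref{prop: supernormal}. You instead invoke condition \eqref{it: supernormal 3} directly and never touch the $C^*$-algebra: you sharpen the preliminary remark to say that $y$ itself (not just $\alpha y$) is periodic, via $\sigma^{m-d(\alpha)}(y)=\sigma^m(\alpha y)=\sigma^m(\beta y)=\sigma^{m-d(\beta)}(y)$ with $m=d(\alpha)\vee d(\beta)$, and the contradiction with the aperiodicity of $x$ is immediate. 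Your route is shorter and purely path-space-theoretic; the cost is that it leans on the equivalence \eqref{it: supernormal 1}$\Leftrightarrow$\eqref{it: supernormal 3} of Proposition~\ref{prop: supernormal}, whereas the paper's computation is self-contained modulo condition \eqref{it: supernormal 1} alone. Your attention to the fact that $d(\alpha)$ and $d(\beta)$ need only be distinct, not comparable, in $\N^k$ --- hence the use of the join $m=d(\alpha)\vee d(\beta)$ --- is exactly the right care for the higher-rank setting. Part \eqref{noa 2} is handled identically in both proofs.
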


\begin{proof}
For \eqref{noa 1}, let $(\alpha,\beta)\in \Sigma$ with $\alpha\neq \beta$ and suppose $x\in s(\alpha)\Lambda^\infty$ is aperiodic.  Then both $\alpha x$ and $\beta x$ are aperiodic, so $\alpha\neq \beta$ implies $\alpha x\neq \beta x$.  Thus for all  $n\in \N^k$ sufficiently large $(\alpha x)(0,n)\neq (\beta x)(0,n)$.  Pick $n$ large and  $\gamma=x(0,n)$.
Then $P^{}_{\alpha\gamma} S^{}_{(\beta x)(0,n)}=S^{}_{\alpha\gamma} S_{\alpha\gamma}^*S^{}_{(\beta x)(0,n)}=S^{}_{\alpha\gamma}S_{(\alpha x)(n,d(\alpha\gamma))}^* S_{(\alpha x)(0,n)}^*S^{}_{(\beta x)(0,n)}=0$.  But $P^{}_{\beta\gamma} S^{}_{(\beta x)(0,n)}=S^{}_{\beta\gamma} S_{(\beta x)(n, d(\beta\gamma))}^*S_{(\beta x)(0,n)}^*S^{}_{(\beta x)(0,n)}=S^{}_{\beta\gamma} S_{(\beta x)(n, d(\beta\gamma))}^*\neq 0$.  Thus $P^{}_{\alpha\gamma} \neq P^{}_{\beta\gamma}$ and so $(\alpha,\beta)$ is not cycline.


Now  \eqref{noa 2} follows from \eqref{noa 1} by the definition of aperiodicity.
\end{proof}
\begin{rmk}
By \cite[Lemma~3.7]{ES}, a \nagychange{standard generator $S^{}_\alpha S^*_\beta$ with $\alpha\neq\beta$ is cycline} if and only if $(\alpha\gamma,\beta\gamma)$ are generalized cycles without entry in the sense of \cite{ES} for all $\gamma\in s(\alpha)\Lambda$.
\end{rmk}

\begin{ex}
\label{ex: 1 sn}
Let $E$ be a $1$-graph. We say a finite path $e_1\cdots e_n$ is a \emph{return path} if $r(e_1)=s(e_n)$; it \emph{has an entry} if there exists an $i$ and an edge $f\in r(e_i)E^1$ with $f\neq e_i$. We claim \nagychange{that a standard generator
$S^{}_\alpha S_\beta^*$ is cycline
 if and only if $\alpha=\beta$, $\alpha=\beta c$ or $\beta=\alpha c$ for some \jbchange{ $c\in s(\alpha)E s(\alpha)$ without entry.}} By \cite[Proposition-Definition~3.1]{NR12} $S^{}_\alpha S_\beta^*$ is normal if and only if $\alpha=\beta$, $\alpha=\beta c$ or $\beta=\alpha c$ for some  \jbchange{ $c\in s(\alpha)E s(\alpha)$ without entry.} Clearly $S^{}_\alpha S_\alpha^*$ is cycline; so it remains to show for $\alpha\neq \beta$.  Let $y$ be $\beta c c c\cdots$ if $\alpha=\beta c$  and $\alpha c c \cdots$ if $\beta=\alpha c$.  In either case $\{y\}=Z(\alpha)=Z(\beta)=Z(\alpha \gamma)=Z(\beta\gamma)$ for any $\gamma\in s(\alpha)E$.  Thus $P^{}_{\alpha\gamma}=P^{}_{\beta\gamma}$ for all $\gamma\in s(\alpha)E$ and so $S^{}_{\alpha} S_\beta^*$ is cycline.
\end{ex}

\begin{ex}
Let $E$ be a $1$-graph, $f: \N^k\to \N$ and $\Lambda=f^*E$, as in Example~\ref{ex: fE}.  Then a standard generator
$S^{}_\alpha S_\beta^*$ in $C^*(\Lambda)$ is cycline if and only if $p_1\alpha=p_1\beta$,  $p_1\alpha=p_1(\beta) c$, or $p_1\beta=p_1(\alpha) c$ for some  \jbchange{ $c\in s(\alpha)E s(\alpha)$ without entry.}  Indeed since $p_1 Z(\mu)=Z(p_1 \mu)$ we get $P^{}_{\alpha\gamma}=P^{}_{\beta\gamma}$ if and only if $P^{}_{p_1(\alpha\gamma)}= P^{}_{p_1(\beta\gamma)}$ and so the result follows from Example~\ref{ex: 1 sn}.
\end{ex}

\begin{rmk}
\begin{brown}
Kumjian and Pask construct for a row-finite $k$-graph $\Lambda$ with no sources an \'etale groupoid $G_\Lambda=\{(x,n-m, y)\in \Lambda^\infty \times \Z^k \times \Lambda^\infty \, | \, \sigma^n(x)=\sigma^m(y)\}$ and show $C^*(\Lambda)\cong C^*(G_\Lambda) $ \cite[Corollary~3.5]{KP00}.  A basis for a totally disconnected locally compact Hausdorff topology on $G_\Lambda$ is given by the cylinder sets
\[
Z(\alpha,\beta)=\{(\alpha x, d(\alpha)-d(\beta), \beta x) \, | \, x\in s(\alpha)\Lambda^\infty\}\quad \text{for $\alpha,\beta\in \Lambda$ with  $s(\alpha)=s(\beta)$.}
\]
The sets $Z(\alpha,\beta)$ are compact open. The isomorphism of Kumjian and Pask is characterized by $S^{}_\alpha S_\beta^*\mapsto \chi_{_{Z(\alpha,\beta)}}$ where $\chi_{_{Z(\alpha,\beta)}}$ is the characteristic function on $Z(\alpha,\beta)$.  Let $\Iso(G_\Lambda):=\{(x,p, y)\in G_\Lambda|x=y\}.$ Then $\interior(\Iso(G_\Lambda))$ is also an \'etale groupoid\footnote{Note $\Iso(G_\Lambda)$ typically is not \'etale} and $C^*(\interior(\Iso(G_\Lambda)))\subset C^*(G_\Lambda)$. Since $(\alpha,\beta)$ cycline if and only if $Z(\alpha,\beta)\subset \Iso(G_\Lambda)$, the isomorphism of $C^*(\Lambda)$ to $C^*(G_\Lambda)$ restricts to an isomorphism of $\nagychangeII{\mathscr{M}}$ with $C^*(\interior(\Iso(G_\Lambda)))$.  We suspect that we can use this groupoid formulation to show $\nagychangeII{\mathscr{M}}=\nagychangeII{\mathscr{M}'}$ but have yet to find a proof.
\end{brown}
\end{rmk}

\section{The ``aperiodic representation''}

Define $\Sigma=\{ (\alpha,\beta)\in \Lambda\times \Lambda | s(\alpha)=s(\beta), \alpha\neq \beta\}$.  Note that $\Sigma$ is countable since \sarchange{$\Lambda$ is.} For all $(\alpha,\beta)\in \Sigma$ let
\[
F_{\alpha,\beta}:=\{x\in \Lambda^\infty \, |\, x(0,d(\alpha))=\alpha, x(0,d(\beta))=\beta, \sigma^{d(\alpha)}(x)=\sigma^{d(\beta)}(x)\}
\]

Note that $F_{\alpha,\beta}=F_{\beta,\alpha}$.

\begin{rmk}
 $F_{\alpha,\beta}$ is closed for  all $(\alpha,\beta)\in \Sigma$.  Indeed, if $x_i\to x$ in $\Lambda^\infty$ and $n\in \N^k$ then $\sigma^{n}(x_i)\to \sigma^n(x)$.  In particular, if $x_i\in F_{\alpha,\beta}$ then $x(0,d(\alpha))=\alpha$,  $x(0, d(\beta))=\beta$ and $\sigma^{d(\alpha)}(x)=\sigma^{d(\beta)}(x)$ so that $x\in F_{\alpha,\beta}$.

As a consequence, $\partial F_{\alpha,\beta}\subset F_{\alpha,\beta}$. Note also that  $\partial F_{\alpha,\beta}$ is closed and meager.
\end{rmk}

\begin{defn}
\label{def: reg path}
For a $k$-graph $\Lambda$, we define the set of \emph{regular paths} to be

\[
\mathfrak{T}_\Lambda = \Lambda^\infty-\bigcup_{(\alpha,\beta)\in \Sigma}\partial F_{\alpha,\beta}.
\]
\end{defn}
We denote $\mathfrak{T}_\Lambda$ by $\mathfrak{T}$ when the graph is clear from context.

\begin{rmk}
\label{rmk: info on X}
Features of $\mathfrak{T}$
\begin{enumerate}
\item\label{it: X dense} $\mathfrak{T}$ is dense in $\Lambda^\infty$ by the Baire Category Theorem.  In particular for every $v\in \Lambda^0$ there exists an $x\in Z(v)\cap \mathfrak{T}$, that is, $v\mathfrak{T}\neq \emptyset$.
\item\label{it: X interior} $\mathfrak{T}\cap F_{\alpha,\beta}\subset \interior(F_{\alpha,\beta})$.
\item\label{it: X invariant 1} For $x\in \mathfrak{T}$ and $\nu\in \Lambda r(x)$ we have $\nu x\in \mathfrak{T}$.
\item\label{it: X invariant 2} For $x\in \mathfrak{T}$ and $n\in \N^k$ we have $\sigma^n(x)\in \mathfrak{T}$.
\end{enumerate}
For items \eqref{it: X invariant 1} and \eqref{it: X invariant 2} we use that $\nu Z(\mu)=Z(\nu\mu)$ and the equivalence $Z(\mu)\subset F_{\alpha,\beta}$ if and only if $Z(\nu\mu)\subset F_{\nu\alpha,\nu\beta}$.
\end{rmk}

\begin{ex}
\label{ex: 1graph}
 Suppose $E$ is a $1$-graph.  Then $\mathfrak{T}_E=E^\infty -\bigcup_{\Sigma} \partial F_{\alpha,\beta}$ is the set of aperiodic infinite paths along with those infinite paths that begin with a return path without entry: that is, $\mathfrak{T}_E$ consists precisely of the infinite {\em essentially aperiodic paths} described in \cite[Definition~2.5 (D)]{NR12}.

To see this, first suppose $x\in E^\infty$ is aperiodic.  Then $x\notin F_{\alpha,\beta}$ for any choice of $(\alpha,\beta)\in \Sigma$.  Since the $F_{\alpha,\beta}$ are closed,  $x\notin \partial F_{\alpha,\beta}$ and so aperiodic paths are in $\nagychangeII{\mathfrak{T}}$.  Next suppose that $x$ is periodic. Then there exists some $(\alpha,\beta)\in \Sigma$ such that $x\in F_{\alpha, \beta}$.  Without loss of generality we can assume $d(\beta)>d(\alpha)$ so that $\beta=\alpha c$ for some  $c\in s(\alpha)E s(\alpha)$.  So $x=\alpha c c c\cdots$.  In fact $F_{\alpha,\beta}=\{x\}$.  \sarchange{If $c$ has an entry, then for any $k \geq 1$, the set
  $Z(\alpha c^k)$ contains a  path of the form $z= \alpha c^k y$ where $y(0,d(c)) \neq c$.  We have
  $\sigma^{d(\alpha)}(z)((k-1)d(c),kd(c))=c \neq y(0,d(c))=\sigma^{d(\beta)}(z)((k-1)d(c),kd(c))$.  Thus $z\in   Z(\alpha c^k)\cap (\Lambda^\infty - F_{\alpha,\beta})$.  Since $k$ was arbitrary,   $x\in \partial F_{\alpha,\beta}$.} Thus periodic paths with entries are not in $\mathfrak{T}_E$.  If $c$ has no entry then $Z(\alpha c)=\{x\}= F_{\alpha,\beta}$ so that $F_{\alpha,\beta}$ is clopen and so $\partial F_{\alpha,\beta}=\emptyset$.  That is periodic paths without entries are in $\mathfrak{T}_E$.
\end{ex}

\begin{ex}
Let $\Lambda$ be as in Example~\ref{ex: fE}.  Recall the definition of the map $p_1: \Lambda^* \rightarrow E^*$ given in Claim~\ref{clm: well def}. Then
\[
\mathfrak{T}_\Lambda=\{x\in \Lambda^\infty: p_1x \text{~~ is  aperiodic or  \jbchange{begins with a return path without entry}}\}.
\]
This follows from the observation that $x\in F_{\alpha,\beta}$ if and only if $p_1 x\in F_{p_1 \alpha, p_1\beta}$ and Example~\ref{ex: 1graph} above.\footnote{Caution: $p_1\alpha$ can equal $p_1\beta$.}
\end{ex}

Consider $\mathfrak{T}$ as in Definition~\ref{def: reg path}.  Our goal is to define a representation of $C^*(\Lambda)$ on $\ell^2(\mathfrak{T})$. Let $\{\delta_x| x \in \mathfrak{T}\}$  be the canonical basis for $\ell^2(\mathfrak{T})$.  For each $\alpha\in \Lambda$ and $x\in \ell^2(\mathfrak{T})$ we put

\begin{equation}
\label{Talpha}
T_\alpha \delta_x=\begin{cases} \delta_{\alpha x} & \text{if } x\in s(\alpha)\Lambda^\infty\\
0 & \text{otherwise}.
\end{cases}
\end{equation}
Remark~\ref{rmk: info on X} shows that ${\alpha x}$ is indeed in $\mathfrak{T}$.     Note that
\[
T_\alpha^* \delta_x=\begin{cases} \delta_{\sigma^{d(\alpha)}(x)} & \text{if } x\in Z(\alpha)\\
0 & \text{otherwise}.
\end{cases}
\]
and $\alpha\mapsto T_\alpha$ gives a Cuntz-Kreiger $\Lambda$-family in $B(\ell^2(\mathfrak{T}))$.   Put $Q_\alpha=T_\alpha T_\alpha^*$.  By the universal property of $C^*(\Lambda)$ the correspondences $S^{}_\alpha\mapsto T_\alpha$ and $P_\alpha\mapsto Q_\alpha$ give a $*$-representation $\upsilon: C^*(\Lambda)\to B(\ell^2(\mathfrak{T}))$.

\jbchange{
\begin{defn}
\label{def flat rep}
For $\Lambda$ a $k$-graph and $\mathfrak{T}$ the set of regular paths in $\Lambda^\infty$.  The \emph{aperiodic representation} is the map $\upsilon: C^*(\Lambda)\to B(\ell^2(\mathfrak{T}))$ characterized by $S^{}_\alpha\mapsto T^{}_\alpha$ for all $\alpha\in \Lambda$.
\end{defn}
}

\begin{defn}
\label{defn: flat}
Call a cycline pair $(\alpha,\beta)$ \emph{special} if $T_\alpha=T_\beta$.  In this case $Q_\alpha=Q_\beta$.
\end{defn}

We use the following to characterize special cycline pairs.

\begin{lem}
\label{lem: compression}
For $(\alpha,\beta)\in \Sigma$ and $x\in \mathfrak{T}$ we have
\begin{enumerate}
\item \label{it: case x notin} If $x\notin F_{\alpha,\beta}$, then there exists $\mu,\nu\in \Lambda$ such that
\begin{align*}
& x\in Z(\mu)\cap Z(\nu), \quad \text{and}\\
& P_\mu S^{}_\alpha S_\beta^*P_\nu =0.
\end{align*}
\item \label{it: case x in} If $x\in F_{\alpha,\beta}$, then there exists $\gamma\in \Lambda$ with
\begin{align*}
& x\in Z(\alpha\gamma)\cap Z(\beta\gamma), \\
& P_{\alpha\gamma} S^{}_\alpha S_\beta^* P_{\beta\gamma}=S^{}_{\alpha\gamma}S_{\beta\gamma}^*,\quad\text{and}\\
& (\alpha\gamma,\beta\gamma) \text{~~special cycline}.
\end{align*}
\end{enumerate}
\end{lem}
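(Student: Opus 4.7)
The plan is to treat the two cases separately, relying on the product formula \eqref{proj form} together with the interior property $\mathfrak{T}\cap F_{\alpha,\beta}\subset\interior(F_{\alpha,\beta})$ from Remark~\ref{rmk: info on X}\eqref{it: X interior}.

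For \eqref{it: case x notin}, I would split according to which of the three defining conditions of $F_{\alpha,\beta}$ fails at $x$. If $x(0,d(\alpha))\neq\alpha$, set $\mu=x(0,d(\alpha))$ and $\nu=x(0,d(\beta))$; then (CK1) forces $S^*_\mu S^{}_\alpha=0$, so $P^{}_\mu S^{}_\alpha S^*_\beta P^{}_\nu=0$. The case $x(0,d(\beta))\neq\beta$ is symmetric. Otherwise $x$ starts with both $\alpha$ and $\beta$ but $\sigma^{d(\alpha)}(x)\neq\sigma^{d(\beta)}(x)$; pick $m\in\N^k$ large enough that the initial segments $\gamma:=\sigma^{d(\alpha)}(x)(0,m)$ and $\eta:=\sigma^{d(\beta)}(x)(0,m)$ are distinct (possible because infinite paths are determined by their initial segments), and set $\mu=\alpha\gamma$, $\nu=\beta\eta$. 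Then $x\in Z(\mu)\cap Z(\nu)$ with $d(\gamma)=d(\eta)$ and $\gamma\neq\eta$, so \eqref{proj form} yields $P^{}_\mu S^{}_\alpha S^*_\beta P^{}_\nu=0$.

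For \eqref{it: case x in}, by Remark~\ref{rmk: info on X}\eqref{it: X interior} there exists $n\in\N^k$ with $Z(x(0,d(\alpha)+n))\subset F_{\alpha,\beta}$, since the cylinders based at $x$ form a neighborhood basis. Setting $\gamma:=\sigma^{d(\alpha)}(x)(0,n)$ and using $\sigma^{d(\alpha)}(x)=\sigma^{d(\beta)}(x)$ gives $x\in Z(\alpha\gamma)\cap Z(\beta\gamma)$ with $Z(\alpha\gamma)\subset F_{\alpha,\beta}$; equation \eqref{proj form} immediately delivers $P^{}_{\alpha\gamma}S^{}_\alpha S^*_\beta P^{}_{\beta\gamma}=S^{}_{\alpha\gamma}S^*_{\beta\gamma}$. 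To show $(\alpha\gamma,\beta\gamma)$ is cycline I would verify condition \eqref{it: supernormal 3} of Proposition~\ref{prop: supernormal}: given any $z\in s(\gamma)\Lambda^\infty$, the path $w:=\alpha\gamma z$ lies in $Z(\alpha\gamma)\subset F_{\alpha,\beta}$, so $w(0,d(\beta))=\beta$ and $\sigma^{d(\beta)}(w)=\sigma^{d(\alpha)}(w)=\gamma z$, whence unique factorization forces $w=\beta\gamma z$. Specialness is then automatic from the definition \eqref{Talpha}, since $T^{}_{\alpha\gamma}$ and $T^{}_{\beta\gamma}$ agree on every $\delta_z$ with $r(z)=s(\gamma)$ and both vanish otherwise.

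The essential subtlety lies in \eqref{it: case x in}: the hypothesis $x\in F_{\alpha,\beta}$ alone only yields $x=\alpha y=\beta y$ for the single $y=\sigma^{d(\alpha)}(x)$, whereas cyclinity of $(\alpha\gamma,\beta\gamma)$ demands $\alpha\gamma z=\beta\gamma z$ for \emph{every} $z\in s(\gamma)\Lambda^\infty$. The Baire-category construction of $\mathfrak{T}$, through the interior property, supplies a whole cylinder $Z(\alpha\gamma)$ of such coincidences around $x$, and thereby converts the pointwise identification at $x$ into the global identification needed.
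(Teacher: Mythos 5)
Your proof is correct and follows essentially the same route as the paper's: the same three-way case split for part (a) using \eqref{proj form}, and for part (b) the same key step of using the interior property of $\mathfrak{T}$ to place a whole cylinder $Z(\alpha\gamma)$ inside $F_{\alpha,\beta}$, from which cyclinity and specialness follow. The only (immaterial) difference is that you verify cyclinity via condition \eqref{it: supernormal 3} of Proposition~\ref{prop: supernormal} on infinite paths, whereas the paper checks the equality of the projections $P_{\alpha\gamma\eta}=P_{\beta\gamma\eta}$ and verifies specialness on the adjoints $T^*_{\alpha\gamma}\delta_y$.
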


\begin{proof}
 For \eqref{it: case x notin} first suppose that $x(0,d(\alpha))\neq \alpha$ or $x(0,d(\beta))\neq \beta$.  Then with $\mu=x(0,d(\alpha))$ and $\nu=x(0, d(\beta))$ we have $P_\mu S^{}_\alpha S_\beta^* P_\nu=0$.

Now suppose that $x(0,d(\alpha))=\alpha$ and $x(0,d(\beta))=\beta$.  The condition $x\notin F_{\alpha,\beta}$ means that
\[
\sigma^{d(\alpha)}(x)\neq \sigma^{d(\beta)} (x)
\]
so there exists an $n\in \N^k$ such that $\gamma:=x(d(\alpha),d(\alpha)+n)\neq x(d(\beta),d(\beta)+n)=:\eta$.  Put $\mu=\alpha\gamma$ and $\nu=\beta\eta$.  By definition $x\in Z(\mu)\cap Z(\nu)$ and
\[
P_\mu S^{}_\alpha S_\beta^* P_\nu=0
\]
as desired.

For \eqref{it: case x in}, assume $x\in F_{\alpha, \beta}$.  By Remark~\ref{rmk: info on X},  $\mathfrak{T}\cap F_{\alpha, \beta}=\interior F_{\alpha,\beta}$. Thus there exists $\epsilon\in \Lambda$ such that $x\in Z(\epsilon)\subset F_{\alpha,\beta}$.  Put
\begin{align*}
\alpha'&=x(d(\epsilon), d(\epsilon)+d(\alpha)),\\
\beta'&=x(d(\epsilon), d(\epsilon)+d(\beta)),
\end{align*}
and $\mu=\epsilon \alpha'$, $\nu=\epsilon\beta'$.  By definition $x\in Z(\mu)\cap Z(\nu)$.  Since $x$ is also in $Z(\alpha)\cap Z(\beta)$, by unique factorization we can write $\mu=\alpha\gamma$ and $\nu=\beta\gamma'$ where $\gamma=\sigma^{d(\alpha)}(x)(0, d(\epsilon))$ and $\gamma'=\sigma^{d(\beta)}(x)(0, d(\epsilon))$.  Since $x\in F_{\alpha,\beta}$, $\sigma^{d(\alpha)}(x)=\sigma^{d(\beta)}(x)$ and so $\gamma=\gamma'$.

Next we show $(\alpha\gamma, \beta\gamma)$ is cycline.   For this it suffices to show that $P_{\alpha\gamma\eta}=P_{\beta\gamma\eta}$ for all $\eta\in s(\gamma)\Lambda$ or equivalently $Z(\alpha\gamma\eta)=Z(\beta\gamma\eta)$.  Let $y\in Z(\alpha\gamma\eta)$ so that $y=\alpha\gamma \eta z=\epsilon \alpha' \eta z$ for some $z\in \Lambda^\infty$.  So $y\in Z(\epsilon)\subset F_{\alpha,\beta}$.  Thus there exists $z'\in \Lambda^\infty$ such that $y=\alpha z'=\beta z'$.  By unique factorization $z'=\gamma\eta z$ so we also have $y=\beta\gamma\eta z\in Z(\beta\gamma\eta)$.  That is $Z(\alpha\gamma\eta)\subset Z(\beta\gamma\eta)$ and by symmetry they are equal.

Finally we show that $(\alpha\gamma,\beta\gamma)$ is special.  It suffices to show
\[
T^*_{\alpha\gamma}\delta_y=T^*_{\beta\gamma} \delta_y \quad \forall ~y\in \mathfrak{T}.
\]
We already know that $Z(\alpha\gamma)=Z(\beta\gamma)$.   Thus if $y\notin Z(\alpha\gamma)$ then $T^*_{\alpha\gamma} \delta_y=0=T^*_{\beta\gamma} \delta_y$.
Now assume $y\in Z(\alpha\gamma)=Z(\beta\gamma)$.  Then since $\alpha\gamma=\epsilon \alpha'$ we have $y\in Z(\epsilon)$ and so $y\in F_{\alpha,\beta}$.  Thus $\sigma^{d(\alpha)}y=\sigma^{d(\beta)}y$ and so
\[
T_{\alpha}^* \delta_y=\delta_{\sigma^{d(\alpha)}y}=\delta_{\sigma^{d(\beta)}y}=T_\beta^* \delta_y.
\]
Therefore $T^*_{\alpha\gamma}\delta_y=T^*_{\beta\gamma} \delta_y$ as well.
\end{proof}

\section{States arising from the \nagychangeII{aperiodic} representation}

We begin by setting up notation:
\begin{align*}
A&:=\upsilon (C^*(\Lambda))\subset B(\ell^2(\mathfrak{T}))\\
D&:=\upsilon(\mathscr{D})=C^*(\{Q_\alpha\}_{\alpha\in \Lambda}\})\subset A\\
M&:=\upsilon(\mathscr{M})=\upsilon(C^*(\{S^{}_\alpha S_\beta^*\, | \, (\alpha,\beta)  \text{ cycline}\})).
\end{align*}

Notice that
\begin{itemize}
\item $ \upsilon(\sarchange{\mathscr{D}'})\subset A\cap D'$
\item $\upsilon|_{\mathscr{D}}$ is a $*$-isomorphism from $\mathscr{D}$ to $D$
\end{itemize}

\begin{rmk}
\label{rmk: ev}
Every $x\in \mathfrak{T}$ determines a pure state $\ev_x^{\mathscr{D}}$ on $\mathscr{D}$ and thus also a pure state $\ev_x^D$ on $D$.
\end{rmk}

\begin{rmk}
\label{rmk: abelian}
For any $x\in \mathfrak{T}$ the net $\{Q_{x(0,n)}\}_{n\in\N^k}\subset B(\ell^2(\mathfrak{T}))$  converges in the strong operator topology to the orthogonal projection $p_x$ onto $\C \delta_x$.  In particular, if $T\in B(\ell^2(\mathfrak{T}))$ commutes with $D$, then $T$ commutes with the all of the projections $p_x$ and so $T\in \ell^\infty (\mathfrak{T})$.  In other words, the commutant $D'$ of $D$ in $B(\ell^2(\mathfrak{T}))$ is simply $\ell^\infty(\mathfrak{T})$.  It then follows that both $M$ and $D'\cap A$ are abelian since both are subalgebras of the abelian $C^*$-algebra $D'$.
\end{rmk}

\begin{prop}
\label{prop: ev unique ext}
For each $x\in \mathfrak{T}$ the pure state $\ev_x^D$ on $D$ has a unique extension to a pure state $\phi_x$ on $A$.
\sarchange{In particular,
\begin{equation} \label{ev ext fmla alpha not= beta}
\phi(T_\alpha T_\beta^*)=\begin{cases} 1 & \text{if~} x\in F_{\alpha,\beta}\\
0 & \text{if ~} x\notin F_{\alpha,\beta}. \end{cases}
\end{equation}}


\end{prop}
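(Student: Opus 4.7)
The plan is to exhibit the extension concretely as a vector state and then prove uniqueness by computing the state's value on every standard generator of $A$; the advertised formula drops out of this computation.

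First I would set $\phi_x(T) := \langle T\delta_x \mid \delta_x\rangle$ for $T \in A$. Using the definition \eqref{Talpha} of $T_\alpha$ together with Remark~\ref{rmk: info on X}\eqref{it: X invariant 2}, one checks directly that $Q_\mu\delta_x = \delta_x$ when $x \in Z(\mu)$ and $Q_\mu\delta_x = 0$ otherwise; thus $\phi_x(Q_\mu) = \chi_{Z(\mu)}(x) = \ev_x^D(Q_\mu)$, so $\phi_x$ is a vector state on $A$ that restricts to $\ev_x^D$ on $D$.

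For uniqueness, suppose $\phi$ is \emph{any} state on $A$ with $\phi\big|_D = \ev_x^D$. For every $\mu \in \Lambda$ with $x \in Z(\mu)$ we have $\phi(Q_\mu) = 1 = \|Q_\mu\|$, so Proposition~\ref{state eq} yields $\phi(aQ_\mu) = \phi(Q_\mu a) = \phi(a)$ for all $a \in A$. Applying this twice to a standard generator $T_\alpha T_\beta^*$ with $\alpha \neq \beta$: if $x \notin F_{\alpha,\beta}$, Lemma~\ref{lem: compression}\eqref{it: case x notin} produces $\mu,\nu$ with $x \in Z(\mu) \cap Z(\nu)$ and $Q_\mu T_\alpha T_\beta^* Q_\nu = 0$, whence $\phi(T_\alpha T_\beta^*) = \phi(Q_\mu T_\alpha T_\beta^* Q_\nu) = 0$. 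If $x \in F_{\alpha,\beta}$, Lemma~\ref{lem: compression}\eqref{it: case x in} supplies $\gamma$ with $x \in Z(\alpha\gamma) \cap Z(\beta\gamma)$ and $Q_{\alpha\gamma} T_\alpha T_\beta^* Q_{\beta\gamma} = T_{\alpha\gamma} T_{\beta\gamma}^*$, where $(\alpha\gamma,\beta\gamma)$ is special; since specialness means $T_{\alpha\gamma} = T_{\beta\gamma}$, this compression equals $Q_{\alpha\gamma}$, so $\phi(T_\alpha T_\beta^*) = \phi(Q_{\alpha\gamma}) = 1$.

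These computations determine $\phi$ on a dense spanning set of $A$, so $\phi = \phi_x$, giving both uniqueness of the extension and formula \eqref{ev ext fmla alpha not= beta}. Purity of $\phi_x$ then follows for free from uniqueness: any convex decomposition $\phi_x = t\psi_1 + (1-t)\psi_2$ restricts on $D$ to a decomposition of the pure state $\ev_x^D$, forcing $\psi_i\big|_D = \ev_x^D$ and hence $\psi_i = \phi_x$ by uniqueness. The main subtlety I anticipate is the $x \in F_{\alpha,\beta}$ case, which hinges on the \emph{special} clause of Lemma~\ref{lem: compression}\eqref{it: case x in}: without $T_{\alpha\gamma} = T_{\beta\gamma}$, the compression $T_{\alpha\gamma} T_{\beta\gamma}^*$ need not be a projection and there is no reason for $\phi$ to evaluate to $1$. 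This is precisely where the careful construction of $\mathfrak{T}$ pays off, since $x \in F_{\alpha,\beta} \cap \mathfrak{T}$ forces $x \in \interior(F_{\alpha,\beta})$ by Remark~\ref{rmk: info on X}\eqref{it: X interior}, which is what lets Lemma~\ref{lem: compression} produce a special cycline compression rather than a bare compression.
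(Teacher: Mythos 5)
Your proposal is correct and follows essentially the same route as the paper: compress a standard generator $T_\alpha^{}T_\beta^*$ by $Q_\mu$ and $Q_\nu$ via Proposition~\ref{state eq}, then invoke Lemma~\ref{lem: compression} to evaluate the compression as $0$ or $1$ according to whether $x\notin F_{\alpha,\beta}$ or $x\in F_{\alpha,\beta}$. You in fact supply slightly more detail than the paper does, by making explicit that specialness of $(\alpha\gamma,\beta\gamma)$ turns the compression into the projection $Q_{\alpha\gamma}$, and by spelling out why purity follows from uniqueness.
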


\begin{proof}
Fix some state $\phi$ on $A$ with $\phi|_D=\ev_x^D$.  We show the values $\phi(T_\alpha T_\beta^*)$ for $\alpha,\beta\in \Lambda$ with $s(\alpha)=s(\beta)$ depend only on $\alpha,\beta$ and $x$.

First note that if $\alpha=\beta$ then
\begin{equation} \label{ev ext fmla alpha=beta}
\phi(T_\alpha T_\beta^*)=\phi(Q_\alpha)=\ev_x^D(Q_\alpha)=\begin{cases} 1 & \text{if~} x\in Z(\alpha)\\ 0 & \text{otherwise}.\end{cases}
\end{equation}

Now suppose $\alpha\neq \beta$, that is $(\alpha,\beta)\in \Sigma$.  \jbchange{Equation~\eqref{ev ext fmla alpha=beta} shows $\phi(Q_\mu)=1=\phi(Q_\nu)$ whenever $x\in Z(\mu)\cap Z(\nu)$ so in this case Proposition~\ref{state eq} gives that
$\phi(T_\alpha T_\beta^*)=\phi(Q_\mu T_\alpha T_\beta^* Q_\nu)$. }Now using Lemma~\ref{lem: compression} it follows that  there exists $\mu,\nu$ so that $x\in Z(\mu)\cap Z(\nu)$ with
\[
\phi(T_\alpha T_\beta^*)=\phi(Q_\mu T_\alpha T_\beta^* Q_\nu)=\begin{cases} 1 & \text{if~} x\in F_{\alpha,\beta}\\
0 & \text{if ~} x\notin F_{\alpha,\beta}. \end{cases}\qedhere
\]
\end{proof}

\begin{rmk}
\label{rmk: states distinct}
The pure states $\ev_x^D$ on $D$ are \emph{distinct}.  Indeed, if $x_1\neq x_2$ then there exists an $n\in \N^k$ such that $x_1(0,n)\neq x_2(0,n)$.  Thus for $i,j\in \{1,2\}$, the projections $P_{x_i(0,n)}$ are orthogonal and so $\ev_{x_i}^D(Q_{x_j(0,n)})=\delta_{i,j}.$
\end{rmk}

\begin{rmk}
\label{rmk: state on M}
The state $\ev^M_x:=\phi_x|_M$ is pure.
This follows since $\phi_x(T)=\langle T \delta_x|\delta_x\rangle$ so in particular this equation holds for all $T\in \ell^\infty (\mathfrak{T})$ (and thus $\ev^M_x$ is a $*$-ho\-mo\-mor\-phism).
\end{rmk}

\begin{brown}
\begin{rmk}
\label{rmk: jf}
The states $\{\phi_x\}_{x\in \mathfrak{T}}$ are jointly faithful on $A$ (\nagychangeII{thus by restriction they are faithful on both $D$ and $M$}).  Indeed,  for a positive operator $T\in A\in B(\ell^2(\mathfrak{T}))$ we have $\phi_x(T)=\langle T\delta_x|\delta_x\rangle$ so that if $\phi_x(T)=0$ for all $x\in \nagychangeII{\mathfrak{T}}$,  $T^{1/2}\delta_x=0$ for all $x\in \nagychangeII{\mathfrak{T}}$.  This gives $T^{1/2}=0$ and so $T=0$.
\end{rmk}
\end{brown}

\begin{rmk}
\label{rmk: X dense in M}
Since $M$ is abelian, $M=C_0(\Omega)$ for some locally compact Hausdorff space $\Omega$.  The correspondence $x\mapsto \ev_x^M$ yields an inclusion  $\nagychangeII{\mathfrak{T}}\subset \Omega$.  As observed above, $\{\ev_x^M\}_{\nagychangeII{x\in\mathfrak{T}}}$ is jointly faithful on $M$ and so $\nagychangeII{\mathfrak{T}}$ is dense in $\Omega$.
\end{rmk}


\section{The Twisted aperiodic Representation}
\jbchange{In this section we augment the aperiodic representation to get an injective representation of $C^*(\Lambda)$.  We then use this representation to prove our main theorem.}

\begin{defn}
\label{defn twist rep}
The \emph{twisted aperiodic representation} of $C^*(\Lambda)$ is the $*$-homomor\-phism
\begin{align*}
\Upsilon: C^*(\Lambda)&\to C(\T^k)\otimes B(\ell^2(\mathfrak{T}))\quad \text{characterized by}\\
  S^{}_\alpha &\mapsto h_{d(\alpha)}\otimes T_\alpha\\
  P_\alpha & \mapsto 1\otimes Q_\alpha.
  \end{align*}
\end{defn}
There is an obvious gauge action on $C(\T^k)\otimes B(\ell^2(\mathfrak{T}))$ and $\Upsilon$ is equivariant with respect to the gauge actions.  Since $\Upsilon|_\mathscr{D}$ is injective the gauge invariant uniqueness theorem \cite[Theorem~4.1]{RSY03} gives that $\Upsilon$ is injective.

\begin{rmk}
\label{rmk: ab}
By construction $\Upsilon(C^*(\Lambda))\subset C(\T^k)\otimes A$ and $\Upsilon(\mathscr{M})\subset C(\T^k)\otimes M$  so that $\Upsilon(\mathscr{M})$ (thus also $\mathscr{M}$) is abelian.

\end{rmk}
\jbchange{As a quick consequence of the isomorphism $\Upsilon$ we are able to describe $\mathscr{D}'$ more fully; we show that $\mathscr{D}'\subset C^*(\Lambda)$ is also abelian and that in fact $\mathscr{D}'=\mathscr{M}'$.}

\begin{prop}
\label{prop: com}
\jbchange{Let $\Lambda$ be a row-finite $k$-graph with no sources, $\mathscr{D}=C^*(\{P_\alpha\})$, $\mathscr{M}=C^*(\{S^{}_\alpha S_\beta^*\, | \, (\alpha,\beta)  \text{ cycline}\})$.  Then}
\begin{enumerate}
\item\label{it: b ab}$\mathscr{D}'$ is abelian.
\item\label{it: b m}  $\mathscr{D}'=\mathscr{M}'$.
\end{enumerate}
\end{prop}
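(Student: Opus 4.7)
The plan is to exploit the injective twisted aperiodic representation $\Upsilon$ from Definition~\ref{defn twist rep}, using the standard identification $C(\T^k) \otimes B(\ell^2(\mathfrak{T})) \cong C(\T^k, B(\ell^2(\mathfrak{T})))$. Under this identification, for each $z\in\T^k$, evaluation at $z$ is a $*$-homomorphism from $C(\T^k) \otimes B(\ell^2(\mathfrak{T}))$ to $B(\ell^2(\mathfrak{T}))$ which carries $C(\T^k) \otimes A$ into $A$, since on simple tensors $f\otimes a$ with $a\in A$ it yields $f(z)a\in A$.

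For part~\eqref{it: b ab}, fix $a\in\mathscr{D}'$. Because $\Upsilon(P_\mu)=1\otimes Q_\mu$, the element $\Upsilon(a)$ commutes with $1\otimes Q_\mu$ for every $\mu\in\Lambda$. Evaluating at any $z\in\T^k$ then gives $\Upsilon(a)(z)\in A$ commuting with each $Q_\mu$, hence with all of $D$; that is, $\Upsilon(a)(z)\in A\cap D'$. By Remark~\ref{rmk: abelian}, $D'=\ell^\infty(\mathfrak{T})$ is abelian, so $A\cap D'$ is abelian. Consequently, for any $a,b\in\mathscr{D}'$ and any $z\in\T^k$, the operators $\Upsilon(a)(z)$ and $\Upsilon(b)(z)$ commute, so $\Upsilon(ab)(z)=\Upsilon(ba)(z)$ for every $z$. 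Thus $\Upsilon(ab)=\Upsilon(ba)$, and injectivity of $\Upsilon$ forces $ab=ba$.

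For part~\eqref{it: b m}, the inclusion $\mathscr{M}'\subset\mathscr{D}'$ is immediate from $\mathscr{D}\subset\mathscr{M}$. For the reverse inclusion, Proposition~\ref{prop: supernormal}\eqref{it: supernormal 2} shows that every cycline generator commutes with $\mathscr{D}$, so $\mathscr{M}\subset\mathscr{D}'$. By part~\eqref{it: b ab}, $\mathscr{D}'$ is abelian, and therefore each element of $\mathscr{D}'$ commutes with every element of $\mathscr{M}\subset\mathscr{D}'$. Hence $\mathscr{D}'\subset\mathscr{M}'$. The only subtle point in this plan is the pointwise-evaluation step, which relies on the tensor-product identification above; the remainder of the argument is essentially formal once the injectivity of $\Upsilon$ and the identification of $D'$ with $\ell^\infty(\mathfrak{T})$ from Remark~\ref{rmk: abelian} are in place.
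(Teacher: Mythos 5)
Your proposal is correct and follows essentially the same route as the paper: both use the evaluation maps on $C(\T^k)\otimes A$ together with the injectivity of $\Upsilon$ and the identification $D'=\ell^\infty(\mathfrak{T})$ from Remark~\ref{rmk: abelian} to prove \eqref{it: b ab}. Your part~\eqref{it: b m} is a slightly more streamlined, purely formal deduction from \eqref{it: b ab} and the containment $\mathscr{M}\subset\mathscr{D}'$, whereas the paper re-invokes the tensor-product picture, but the substance is the same.
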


\begin{proof}
Let $b\in \mathscr{D}'$, then $\Upsilon(b)\in C(\T^k)\otimes A$ commutes with  $\Upsilon(P_\alpha)$ for all $\alpha\in \Lambda$.  For $z\in \T^k$ let $\varepsilon_z: C(\T^k)\otimes A\to A$ be the evaluation map.  Then the element $\varepsilon_z(\Upsilon(b))\in A$ commutes with all $\varepsilon_z(\Upsilon(P_\alpha))=Q_\alpha$.  So $\varepsilon_z(\Upsilon(b))\in D'=\ell^\infty(\mathfrak{T})$.  In other words
\[
\Upsilon(b)\in C(\T^k)\otimes \ell^\infty(\mathfrak{T}).
\]
Firstly, $\Upsilon(\mathscr{D}')$ is a subalgebra of an abelian $C^*$-algebra and hence abelian;  since $\Upsilon$ is an isomorphism, $\mathscr{D}'$ is abelian too giving \eqref{it: b ab}.  Secondly, since $M\subset \ell^\infty(\mathfrak{T})$ we get that $\Upsilon(b)$ commutes with $C(\T^k)\otimes M$.  In particular $\Upsilon(b)$ commutes with $\Upsilon(\mathscr{M})$. Hence $\mathcal{D}'\subset \mathscr{M}'\subset \mathscr{D}'$, giving \eqref{it: b m}.
\end{proof}

\jbchange{Recall that for  $n\in \Z^k$, we denote the associated character on $\T^k$ by $h_n$, that is $h_n(z_1,...,z_k)=z_1^{n_1}\cdots z_k^{n_k}$. }

\begin{lem}
\label{lem: H sets equal}
For  any $x\in \mathfrak{T}$,   the following sets of functions in $C(\T^k)$ are all equal.
\begin{align*}
H_x &:= \{1\}\cup\{ h_{d(\alpha)-d(\beta)}\, |\, (\alpha,\beta)\in \Sigma, \,  x\in F_{\alpha,\beta}\},\\
H^c_x &:= \{1\}\cup\{ h_{d(\alpha)-d(\beta)}\, |\, (\alpha,\beta)\in \Sigma\,\text{ with $(\alpha,\beta)$ cycline, } x\in F_{\alpha,\beta}\},\\
H^s_x &:= \{1\}\cup\{ h_{d(\alpha)-d(\beta)} \, |\, (\alpha,\beta)\in \Sigma\text{ with $(\alpha,\beta)$ special cycline, } x\in F_{\alpha,\beta}\}.
\end{align*}
\end{lem}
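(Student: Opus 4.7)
The plan is to prove equality by showing the obvious chain of inclusions $H_x^s \subseteq H_x^c \subseteq H_x$ followed by the reverse inclusion $H_x \subseteq H_x^s$, which is the only nontrivial containment. The first two inclusions are immediate from the definitions, since every special cycline pair is in particular a cycline pair in $\Sigma$, and every cycline pair in $\Sigma$ is in $\Sigma$.

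For the substantive inclusion $H_x \subseteq H_x^s$, I would fix a nontrivial element $h_{d(\alpha)-d(\beta)}$ of $H_x$, so that $(\alpha,\beta)\in\Sigma$ and $x\in F_{\alpha,\beta}$. The goal is to produce a special cycline pair $(\alpha',\beta')\in\Sigma$ with $x\in F_{\alpha',\beta'}$ and $d(\alpha')-d(\beta')=d(\alpha)-d(\beta)$. The natural candidate comes straight from Lemma~\ref{lem: compression}\eqref{it: case x in}: it yields a path $\gamma\in\Lambda$ such that $x\in Z(\alpha\gamma)\cap Z(\beta\gamma)$ and $(\alpha\gamma,\beta\gamma)$ is special cycline. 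Setting $\alpha'=\alpha\gamma$ and $\beta'=\beta\gamma$, one has $d(\alpha')-d(\beta')=d(\alpha)-d(\beta)$ by construction.

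What remains is to verify that $(\alpha\gamma,\beta\gamma)\in\Sigma$ and $x\in F_{\alpha\gamma,\beta\gamma}$. The first is a small but important observation: if $d(\alpha)=d(\beta)$ then the conditions $x(0,d(\alpha))=\alpha$ and $x(0,d(\beta))=\beta$ in the definition of $F_{\alpha,\beta}$ would force $\alpha=\beta$, contradicting $(\alpha,\beta)\in\Sigma$; hence $F_{\alpha,\beta}\neq\emptyset$ forces $d(\alpha)\neq d(\beta)$, and therefore $d(\alpha\gamma)\neq d(\beta\gamma)$, giving $\alpha\gamma\neq\beta\gamma$. For the second, $x\in Z(\alpha\gamma)\cap Z(\beta\gamma)$ already supplies $x(0,d(\alpha\gamma))=\alpha\gamma$ and $x(0,d(\beta\gamma))=\beta\gamma$, while the cycline property via Proposition~\ref{prop: supernormal}\eqref{it: supernormal 3} applied to $y=\sigma^{d(\alpha\gamma)}(x)\in s(\alpha\gamma)\Lambda^\infty$ yields $\alpha\gamma\,\sigma^{d(\alpha\gamma)}(x)=\beta\gamma\,\sigma^{d(\alpha\gamma)}(x)$; unique factorization then gives $\sigma^{d(\alpha\gamma)}(x)=\sigma^{d(\beta\gamma)}(x)$, completing membership in $F_{\alpha\gamma,\beta\gamma}$.

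The main (only real) obstacle is the application of Lemma~\ref{lem: compression}\eqref{it: case x in}, which is where the regularity condition on $x\in\mathfrak{T}$ is essential: it is precisely the fact that $\mathfrak{T}\cap F_{\alpha,\beta}\subset\interior(F_{\alpha,\beta})$ that lets us thicken $(\alpha,\beta)$ by a common extension $\gamma$ to reach a cycline, and in fact special cycline, pair while preserving the degree difference. Once that lemma is invoked, everything else is bookkeeping about the definition of $F_{\alpha,\beta}$ and $\Sigma$.
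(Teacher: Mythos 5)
Your proof is correct and follows the same route as the paper: the trivial inclusions $H_x^s\subseteq H_x^c\subseteq H_x$ plus an application of Lemma~\ref{lem: compression}\eqref{it: case x in} to replace $(\alpha,\beta)$ by the special cycline pair $(\alpha\gamma,\beta\gamma)$ with the same degree difference. In fact you carefully verify two points the paper leaves implicit --- that $(\alpha\gamma,\beta\gamma)\in\Sigma$ and that $x\in F_{\alpha\gamma,\beta\gamma}$ --- and both of your verifications are valid.
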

\begin{brown}
\begin{rmk}
In \cite{CKSS} \nagychangeII{Carlsen, Kang, Shotwell and Sims define
$$\text{Per}(\Lambda):=\{d(\alpha)-d(\beta)\, |\, (\alpha,\beta) \text{~cycline}\}.$$}They show that if for every $v_1,v_2\in \Lambda^0$ there exists $\lambda_1, \lambda_2\in \Lambda$ such that $s(\lambda_1)=s(\lambda_2)$ and $r(\lambda_i)=v_i$ $i\in \{1,2\}$ then $\text{Per}(\Lambda)$ is a group \cite[Theorem~4.2]{CKSS}.  The same proof shows that in this case $H^c_x\cong\text{Per}(\Lambda)$ for every $x\in \mathfrak{T}$  and thus they are all isomorphic as groups.
\end{rmk}
\end{brown}
\begin{proof}[Proof of Lemma~\ref{lem: H sets equal}:]
By definition $H^s_x\subset H^c_x\subset H_x$, \jbchange{so it suffices to show $H_x\subset H^s_x$}.  If $g\in H_x-\{1\}$, then $g=h_{d(\alpha)-d(\beta)}$ for some $(\alpha,\beta)\in \Sigma$ with $x\in F_{\alpha,\beta}$.  By Lemma~\ref{lem: compression} \eqref{it: case x in}, there exists $\gamma$ such that $(\alpha\gamma,\beta\gamma)$ special cycline, $x\in Z(\alpha\gamma)\cap Z(\beta\gamma)$ and
\[
P_\mu S^{}_\alpha S_\beta^* P_\nu=S^{}_{\alpha\gamma} S_{\beta\gamma}.
\]
We still have $x\in F_{\alpha\gamma,\beta\gamma}$ and $d(\alpha\gamma)-d(\beta\gamma)=d(\alpha)-d(\beta)$ so $g=h_{d(\alpha\gamma)-d(\beta\gamma)}$ proving $g\in H_x^{\nagychangeII{s}}$.
\end{proof}

For every $(z,x)\in \T^k\times \mathfrak{T}$, by Proposition~\ref{pure 1} we have that $\ev_z\otimes \phi_x$ is a pure state on $C(\T^k)\otimes A$. Consider the following states
\begin{align*}
\psi_{z,x}&= \ev_z\otimes \phi_x|_{\Upsilon(C^*(\Lambda))}\\
e_{z,x}&=\psi_{z,x}|_{\Upsilon(\mathscr{M})}.
\end{align*}
Since $\Upsilon(\mathscr{M})\subset C(\T^k)\otimes M\subset C(\T^k)\otimes A$ we can also write $e_{z,x}=(\ev_z\otimes \ev_x^M)|_{\Upsilon(\mathscr{M})}.$

\begin{rmk}
\label{rmk: maps}
We should caution the reader that the maps from $\T^k\times \mathfrak{T}$ to states on $\Upsilon(C^*(\Lambda))$ and $\Upsilon(\mathscr{M})$ given by
\begin{align*}
(z,x) &\mapsto \psi_{z,x}\\
(z,x)&\mapsto e_{z,x}
\end{align*}
are not injective in general (See Lemma~\ref{lem:main}).
\end{rmk}

Let   $\mathcal{Y}$ be the collection of states on $\Upsilon(C^*(\Lambda))$ and $\mathcal{X}$ be the collection of pure states on $\Upsilon(\mathscr{M})$  given by
\begin{align}
\mathcal{Y}&=\{\psi_{z,x}| (z,x)\in \T^k\times \nagychangeII{\mathfrak{T}}\}\label{special state}\\
\mathcal{X}&=\{e_{z,x}| (z,x)\in \T^k\times \nagychangeII{\mathfrak{T}}\}.\nonumber
\end{align}

\begin{lem}\label{lem:jointfaith}
\begin{enumerate}
\item\label{it:Y} \nagychangeII{
$\mathcal{Y}$ is a jointly faithful set of states on $\Upsilon(C^*(\Lambda))$.}
\item\label{it:X} $\mathcal{X}$ is a jointly faithful set of states on $\Upsilon(\mathscr{M})$.
\end{enumerate}
\end{lem}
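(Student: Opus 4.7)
The plan is to reduce both parts of the lemma to the joint faithfulness of $\{\phi_x\}_{x\in\mathfrak{T}}$ on $A$, which is already recorded in Remark~\ref{rmk: jf}, combined with the simple observation that joint faithfulness is inherited by tensoring with $C(\T^k)$ and by restricting to subalgebras.

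For part \eqref{it:Y}, I would first recognize that $\psi_{z,x}=\phi_x\circ\varepsilon_z$, where $\varepsilon_z\colon C(\T^k)\otimes A\to A$ is the $*$-homomorphism of evaluation at $z$ (the same map used in Proposition~\ref{prop: com}). Given $a\in\Upsilon(C^*(\Lambda))\subset C(\T^k)\otimes A$ with $\psi_{z,x}(a^*a)=0$ for every $(z,x)\in\T^k\times\mathfrak{T}$, I fix $z\in\T^k$ and note that $\psi_{z,x}(a^*a)=\phi_x(\varepsilon_z(a)^*\varepsilon_z(a))$. Since Remark~\ref{rmk: jf} says $\{\phi_x\}_{x\in\mathfrak{T}}$ is jointly faithful on $A$, this forces $\varepsilon_z(a)=0$. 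As this holds for every $z\in\T^k$ and an element of $C(\T^k)\otimes A$ is determined by its values at points of $\T^k$, we conclude $a=0$.

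For part \eqref{it:X}, the argument is a free consequence of part \eqref{it:Y}: since $\Upsilon(\mathscr{M})\subset\Upsilon(C^*(\Lambda))$ and each $e_{z,x}$ is by definition the restriction of $\psi_{z,x}$ to $\Upsilon(\mathscr{M})$, if $b\in\Upsilon(\mathscr{M})$ satisfies $e_{z,x}(b^*b)=0$ for all $(z,x)$, then $\psi_{z,x}(b^*b)=0$ for all $(z,x)$ as well, and part \eqref{it:Y} delivers $b=0$.

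There is essentially no obstacle here; the only point worth double-checking is the elementary tensor-product identity $\psi_{z,x}=\phi_x\circ\varepsilon_z$ (which is immediate on elementary tensors $f\otimes T$ and extends by continuity to all of $C(\T^k)\otimes A$), together with the fact that $\varepsilon_z$ is a $*$-homomorphism so that it commutes with the operation $a\mapsto a^*a$. Once these routine points are in place, joint faithfulness is transferred from $A$ to $C(\T^k)\otimes A$ pointwise in $z$, and then trivially restricted to the subalgebra $\Upsilon(\mathscr{M})$.
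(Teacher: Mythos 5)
Your proposal is correct and follows essentially the same route as the paper: part \eqref{it:X} is deduced from part \eqref{it:Y} by restriction, and part \eqref{it:Y} is reduced to the joint faithfulness of $\{\phi_x\}_{x\in\mathfrak{T}}$ on $A$ from Remark~\ref{rmk: jf}, your evaluation-map argument being exactly the justification of the paper's assertion that $\{\ev_z\otimes\phi_x\}$ is jointly faithful on $C(\T^k)\otimes A$.
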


\begin{proof}
Since the restriction map $\psi\mapsto \psi|_{\Upsilon(\mathscr{M})}$ maps $\mathcal{Y}$ onto $\mathcal{X}$ it suffices to prove~\eqref{it:Y}.  But this is trivial since $\{\ev_z\otimes\phi_x\}$ is a jointly faithful collection of states on $C(\T^k)\otimes A$ (see Remark~\ref{rmk: jf}).
\end{proof}

Before getting to our next theorem we need a lemma that relates the kernels of the maps in Remark~\ref{rmk: maps}.

\begin{lem}
\label{lem:main}
For $z_1, z_2\in \T^k$ and $x\in \nagychangeII{\mathfrak{T}}$, the following are equivalent:
\begin{enumerate}
\item\label{it:e} $e_{z_1, x}=e_{z_2,x}$;
\item\label{it:psi}$\psi_{z_1,x}=\psi_{z_2,x}$;
\item\label{it:h}$h(z_1)=h(z_2)$ for all $h\in H_x$.
\end{enumerate}
\end{lem}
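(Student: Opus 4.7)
The plan is to make everything completely explicit by evaluating $\psi_{z,x}$ and $e_{z,x}$ on the standard (resp. cycline) generators, and then reading off when two such states agree. The key computation, which I would carry out first, is that for any $\alpha,\beta\in\Lambda$ with $s(\alpha)=s(\beta)$,
\[
\psi_{z,x}\bigl(\Upsilon(S_\alpha S_\beta^*)\bigr)=(\ev_z\otimes\phi_x)\bigl(h_{d(\alpha)-d(\beta)}\otimes T^{}_\alpha T_\beta^*\bigr)=h_{d(\alpha)-d(\beta)}(z)\,\phi_x(T^{}_\alpha T_\beta^*),
\]
and by Proposition~\ref{prop: ev unique ext} this equals $h_{d(\alpha)-d(\beta)}(z)$ when $x\in F_{\alpha,\beta}$ (with the convention $F_{\alpha,\alpha}=Z(\alpha)$) and $0$ otherwise. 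The same formula applies to $e_{z,x}$ when the pair $(\alpha,\beta)$ is cycline.

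The implication \eqref{it:psi}$\Rightarrow$\eqref{it:e} is immediate from $e_{z,x}=\psi_{z,x}|_{\Upsilon(\mathscr{M})}$. For \eqref{it:h}$\Rightarrow$\eqref{it:psi}, I would note that the standard generators $\Upsilon(S^{}_\alpha S_\beta^*)$ linearly span a dense subspace of $\Upsilon(C^*(\Lambda))$, and on each such generator the above computation shows $\psi_{z_1,x}$ and $\psi_{z_2,x}$ agree: when $x\notin F_{\alpha,\beta}$ both are $0$, and when $x\in F_{\alpha,\beta}$ the common value involves the character $h_{d(\alpha)-d(\beta)}\in H_x$, on which $z_1$ and $z_2$ agree by hypothesis.

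The main content is the implication \eqref{it:e}$\Rightarrow$\eqref{it:h}, and this is where Lemma~\ref{lem: H sets equal} is essential. Assuming $e_{z_1,x}=e_{z_2,x}$, I would pick an arbitrary nontrivial $h\in H_x$, write $h=h_{d(\alpha)-d(\beta)}$ with $(\alpha,\beta)\in\Sigma$ and $x\in F_{\alpha,\beta}$, and note that the pair $(\alpha,\beta)$ produced by the definition of $H_x$ is not a priori cycline. This is exactly the gap filled by Lemma~\ref{lem: H sets equal}: the lemma guarantees $H_x=H_x^c$, so we may in fact choose $(\alpha,\beta)$ cycline (indeed special cycline) without changing $h$. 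For such a cycline pair, evaluating $e_{z_1,x}$ and $e_{z_2,x}$ on $\Upsilon(S^{}_\alpha S_\beta^*)\in\Upsilon(\mathscr{M})$ gives $h(z_1)=h(z_2)$, as desired.

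The principal obstacle here is purely notational rather than conceptual, namely ensuring that the cycline pairs produced by Lemma~\ref{lem: H sets equal} really do lie in $\Sigma$ (i.e. have $\alpha\neq\beta$) so that the associated generator $\Upsilon(S^{}_\alpha S_\beta^*)$ genuinely encodes the character $h_{d(\alpha)-d(\beta)}$; this is automatic since the case $\alpha=\beta$ only contributes the trivial character $1$, for which \eqref{it:h} is vacuous.
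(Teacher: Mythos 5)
Your proposal is correct and follows essentially the same route as the paper: the implication \eqref{it:e}$\Rightarrow$\eqref{it:h} via Lemma~\ref{lem: H sets equal} (replacing an arbitrary pair by a cycline one representing the same character, then evaluating $e_{z_i,x}$ on the resulting generator using $\phi_x(T^{}_\alpha T_\beta^*)=1$), the implication \eqref{it:h}$\Rightarrow$\eqref{it:psi} by the explicit formula for $\psi_{z,x}$ on standard generators from Proposition~\ref{prop: ev unique ext} together with density, and \eqref{it:psi}$\Rightarrow$\eqref{it:e} by restriction. No gaps.
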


\begin{proof}
For $\eqref{it:e}\Rightarrow \eqref{it:h}$, \jbchange{suppose $e_{z_1, x}=e_{z_2,x}$} and let $h\in H_x$ be given.  Since $H_x=H_x^s$ there exists $(\alpha,\beta)\in \Sigma$  cycline with $x\in F_{\alpha,\beta}$ and $h=h_{d(\alpha)-d(\beta)}$.   Then $S^{}_\alpha S_\beta^*\in \mathscr{M}$ and so $\Upsilon(S^{}_\alpha S_\beta^*)=h\otimes T_\alpha T_\beta^*$ with  $T_\alpha T_\beta^*\in M$.  Now
\begin{align*}
h(z_1)=h(z_1)\phi_x(T_\alpha T_\beta^*)&=e_{z_1, x}(\Upsilon (S^{}_\alpha S_\beta^*))\\
&=e_{z_2,x}(\Upsilon (S^{}_\alpha S_{\beta}^*))=h(z_2)\phi_x(T_\alpha T_\beta^*)=h(z_2)
\end{align*}
giving \eqref{it:h}.

For $\eqref{it:h}\Rightarrow \eqref{it:psi}$, suppose $h(z_1)=h(z_2)$ for all $h\in H_x$.  Since
\nagychangeII{$$\psi_{z,x}=\ev_x\otimes \phi_x|_{(\Upsilon(C^*(\Lambda)))},$$by Proposition~\ref{prop: ev unique ext}  it follows} that
\[
\psi_{z_i,x} (\Upsilon(S^{}_\alpha S_\beta^*))=\begin{cases} 1 & \text{if~} \alpha=\beta \quad\text{and}\quad x\in Z(\alpha)\\
0 & \text{if~} \alpha=\beta \quad\text{and}\quad x\notin Z(\alpha)\\
h_{d(\alpha)-d(\beta)}(z_i) & \text{if~} (\alpha,\beta)\in \Sigma\quad \text{and}\quad x\in F_{\alpha,\beta}\\
0 & \text{if~} (\alpha,\beta)\in \Sigma\quad \text{and}\quad x\notin F_{\alpha,\beta}.
\end{cases}
\]
\jbchange{But by  \eqref{it:h} we have $h_{d(\alpha)-d(\beta)}(z_1)=h_{d(\alpha)-d(\beta)}(z_2)$, therefore $\psi_{z_1,x} (\Upsilon(S^{}_\alpha S_\beta^*))=\psi_{z_2,x} (\Upsilon(S^{}_\alpha S_\beta^*))$}.

Since $e_{z_i,x}=\psi_{z_i,x}|_{\Upsilon(\mathscr{M})}$, $\eqref{it:psi}\implies \eqref{it:e}$ is trivial.
\end{proof}

\begin{thm}
\label{thm: state ext}
Let $\Lambda$ be a row-finite $k$-graph with no sources, $\mathfrak{T}$ be the set of regular paths in $\Lambda^\infty$ and $\mathcal{X}$  be  as in \eqref{special state}. Then
every state in $\mathcal{X}$ extends uniquely to a state on $\Upsilon(C^*(\Lambda))$.  Specifically, if $\rho\in \mathcal{X}$ is presented as $\rho=e_{z,x}$ for some pair $(z,x)\in \T^k\times \mathfrak{T}$, then its unique extension is $\psi_{z,x}$.\footnote{Caution:  $\rho$ may be presented in many different ways!}
\end{thm}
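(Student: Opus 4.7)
The plan is as follows. The existence of an extension $\psi_{z,x}$ of $e_{z,x}$ to $\Upsilon(C^*(\Lambda))$ is immediate from the definitions, so only uniqueness is at stake. I will fix an arbitrary state $\eta$ on $\Upsilon(C^*(\Lambda))$ with $\eta\big|_{\Upsilon(\mathscr{M})}=e_{z,x}$ and show $\eta=\psi_{z,x}$. Since the standard generators $\Upsilon(S^{}_\alpha S_\beta^*)=h_{d(\alpha)-d(\beta)}\otimes T^{}_\alpha T_\beta^*$ linearly span a dense subset of $\Upsilon(C^*(\Lambda))$, it is enough to verify equality of the two states on each such element.

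The pivotal observation is that every spectral projection $\Upsilon(P_\mu)=1\otimes Q_\mu$ already lies in $\Upsilon(\mathscr{D})\subset\Upsilon(\mathscr{M})$, so for any $\mu\in\Lambda$ with $x\in Z(\mu)$ we have
\[
\eta(\Upsilon(P_\mu))=e_{z,x}(1\otimes Q_\mu)=\ev_x^D(Q_\mu)=1=\|\Upsilon(P_\mu)\|.
\]
Applying Proposition~\ref{state eq} twice (once on each side, with $b=\Upsilon(P_\mu)$ and $b=\Upsilon(P_\nu)$, respectively), I obtain the compression identity
\[
\eta(\Upsilon(S^{}_\alpha S_\beta^*))=\eta(\Upsilon(P^{}_\mu S^{}_\alpha S_\beta^* P^{}_\nu))
\]
for every $\mu,\nu\in\Lambda$ with $x\in Z(\mu)\cap Z(\nu)$.

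With this in hand, I split into cases. When $\alpha=\beta$, the element $\Upsilon(S^{}_\alpha S_\alpha^*)=1\otimes Q_\alpha$ is already in $\Upsilon(\mathscr{M})$, so equality is immediate. For $(\alpha,\beta)\in\Sigma$, I apply Lemma~\ref{lem: compression}. If $x\notin F_{\alpha,\beta}$, part~\eqref{it: case x notin} produces $\mu,\nu$ for which the compression vanishes, yielding $\eta(\Upsilon(S^{}_\alpha S_\beta^*))=0$, which agrees with the formula \eqref{ev ext fmla alpha not= beta} computed for $\psi_{z,x}$. If $x\in F_{\alpha,\beta}$, part~\eqref{it: case x in} furnishes $\gamma$ with $P^{}_{\alpha\gamma}S^{}_\alpha S_\beta^*P^{}_{\beta\gamma}=S^{}_{\alpha\gamma}S_{\beta\gamma}^*$ and $(\alpha\gamma,\beta\gamma)$ special cycline; in particular $S^{}_{\alpha\gamma}S_{\beta\gamma}^*\in\mathscr{M}$, so
\[
\eta(\Upsilon(S^{}_\alpha S_\beta^*))=e_{z,x}(\Upsilon(S^{}_{\alpha\gamma}S_{\beta\gamma}^*))=h_{d(\alpha)-d(\beta)}(z)\,\phi_x(T^{}_{\alpha\gamma}T_{\beta\gamma}^*),
\]
and this matches $\psi_{z,x}(\Upsilon(S^{}_\alpha S_\beta^*))=h_{d(\alpha)-d(\beta)}(z)\,\phi_x(T^{}_\alpha T_\beta^*)$ because both $\phi_x$-values are $1$ by Proposition~\ref{prop: ev unique ext} (using $x\in F_{\alpha,\beta}$ and, since $(\alpha\gamma,\beta\gamma)$ is cycline with $x\in Z(\alpha\gamma)$, also $x\in F_{\alpha\gamma,\beta\gamma}$).

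The principal obstacle is recognizing that the compression trick of Proposition~\ref{state eq} reduces the question about a potentially non-cycline generator $S^{}_\alpha S_\beta^*$ to one about a (special) cycline generator $S^{}_{\alpha\gamma}S_{\beta\gamma}^*$ lying inside $\mathscr{M}$; once that reduction is identified, Lemma~\ref{lem: compression} does all of the combinatorial work and the remainder is routine bookkeeping.
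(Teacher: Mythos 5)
Your proof is correct, and it takes a genuinely different route from the paper's. The paper argues by contradiction with purity: it takes two distinct pure state extensions of $\rho$, extends each to a pure state on the ambient algebra $C(\T^k)\otimes A$, invokes Proposition~\ref{pure 2} (via the unique extension $\ev_{x_0}^D\rightsquigarrow\phi_{x_0}$ of Proposition~\ref{prop: ev unique ext}) to identify both as $\ev_{z_i}\otimes\phi_{x_0}$, and then derives a contradiction from Lemma~\ref{lem:main}, whose proof in turn rests on Lemma~\ref{lem: H sets equal}. You instead take an arbitrary state extension $\eta$ and pin down its values on all standard generators directly, by lifting the compression argument from the proof of Proposition~\ref{prop: ev unique ext} one level up: the projections $1\otimes Q_\mu$ lie in $\Upsilon(\mathscr{M})$, so Proposition~\ref{state eq} applies, and Lemma~\ref{lem: compression} reduces each generator either to $0$ or to a cycline generator $S^{}_{\alpha\gamma}S^*_{\beta\gamma}$, on whose image $\eta$ is prescribed by $e_{z,x}$. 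Your approach buys two things: it bypasses Proposition~\ref{pure 2}, Lemma~\ref{lem: H sets equal} and Lemma~\ref{lem:main} entirely, and it handles arbitrary (not just pure) state extensions without needing the standard face/Krein--Milman reduction that the paper's contradiction argument implicitly relies on. What the paper's route buys is that Lemma~\ref{lem:main} is established as a standalone equivalence (cited in Remark~\ref{rmk: maps} to explain the non-injectivity of $(z,x)\mapsto\psi_{z,x}$), whereas your argument only yields the implication $e_{z_1,x}=e_{z_2,x}\Rightarrow\psi_{z_1,x}=\psi_{z_2,x}$ as a byproduct. All the individual steps check out: $\|1\otimes Q_\mu\|=1$ and $\eta(1\otimes Q_\mu)=1$ for $x\in Z(\mu)$, the pair $(\alpha\gamma,\beta\gamma)$ lies in $\Sigma$ by unique factorization, and $x\in F_{\alpha\gamma,\beta\gamma}$ follows as you say, so both $\phi_x$-values in the last display equal $1$.
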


\begin{proof}
Fix $\rho=e_{z_0,x_0}$ for some $(z_0,x_0)\in \T^k\times \mathfrak{T}$.  By way of contradiction, assume there are two distinct pure states $\xi_1,\xi_2$ on $\Upsilon(C^*(\Lambda))$ such that $\xi_1|_{\Upsilon(\mathscr{M})}=\xi_2|_{\Upsilon(\mathscr{M})}=\rho$.  For each $i\in \{1,2\}$ extend $\xi_i$ to a pure state $\zeta_i$ on $C(\T^k)\otimes A$.  Observe that when restricted to $\Upsilon(\mathscr{D})=\C\otimes D\subset \Upsilon(\mathscr{M})\subset\Upsilon(C^*(\Lambda))$ we have for all $\alpha\in \Lambda$

\[
\zeta_1(1\otimes Q_\alpha)=\xi_1(1\otimes Q_\alpha)=\rho(1\otimes Q_\alpha)=\xi_2(1\otimes Q_\alpha)=\zeta_2(1\otimes Q_\alpha).
\]
\jbchange{By Proposition~\ref{prop: ev unique ext}, $\ev_{x_0}^D$ extends uniquely to $\phi_{x_0}$. Hence Proposition~\ref{pure 2} shows there exist} $z_1, z_2\in \T^k$ such that
\[
\zeta_i=\ev_{z_i}\otimes \phi_{x_0}.
\]
By restricting to $\Upsilon(C^*(\Lambda))$ we have
\[
\xi_i=(\ev_{z_i}\otimes \phi_{x_0})|_{\Upsilon(C^*(\Lambda))}=\psi_{z_i,x_0}.
\]
So now we have $z_1, z_2\in \T^k$ such that $\psi_{z_1, x_0}$ and $\psi_{z_2,x_0}$ are distinct states on $\Upsilon(C^*(\Lambda))$ but $e_{z_1,x_0}=e_{z_2,x_0}=\rho$. This contradicts Lemma~\ref{lem:main}.
\end{proof}

\jbchange{Theorem~\ref{thm: state ext} along with Proposition~\ref{prop: main state} now conspire to obtain a uniqueness theorem for higher-rank graphs.}

\begin{thm}
\label{thm: inj}
\jbchange{Let $\Lambda$ be a row-finite $k$-graph with no sources. If $B$ is a $C^*$-algebra and $\pi: C^*(\Lambda)\to B$ is a $*$-ho\-mo\-mor\-phism that is injective \nagychangeII{
when restricted to}
$\mathscr{M}=C^*(\{S^{}_\alpha S_\beta^* \,  |\, (\alpha,\beta)\text{\rm\ cycline}\})$, then $\pi$ is injective.}
\end{thm}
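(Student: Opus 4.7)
The plan is to combine the injectivity of the twisted aperiodic representation $\Upsilon$ with the abstract uniqueness machinery of Theorem~\ref{prop: main state}, using the state extension result of Theorem~\ref{thm: state ext} as the key ingredient. Since $\Upsilon:C^*(\Lambda)\to C(\T^k)\otimes B(\ell^2(\mathfrak{T}))$ is injective (as recorded after Definition~\ref{defn twist rep}), $\pi$ factors as $\pi=\tilde\pi\circ\Upsilon$ for a unique $*$-homomorphism $\tilde\pi:\Upsilon(C^*(\Lambda))\to B$; moreover, since $\Upsilon$ restricts to a $*$-isomorphism from $\mathscr{M}$ onto $\Upsilon(\mathscr{M})$, the hypothesis that $\pi|_{\mathscr{M}}$ is injective is equivalent to the assertion that $\tilde\pi|_{\Upsilon(\mathscr{M})}$ is injective, and likewise for the conclusions. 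The theorem therefore reduces to showing that $\tilde\pi$ is injective whenever its restriction to $\Upsilon(\mathscr{M})$ is.

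To obtain this reduction I would apply Theorem~\ref{prop: main state} to the abelian subalgebra $\Upsilon(\mathscr{M})\subset\Upsilon(C^*(\Lambda))$, taking $F=\mathcal{X}=\{e_{z,x}:(z,x)\in\T^k\times\mathfrak{T}\}$ from \eqref{special state}. Three hypotheses must be verified. First, each $e_{z,x}$ is pure on $\Upsilon(\mathscr{M})$: by Remark~\ref{rmk: ab}, $\Upsilon(\mathscr{M})$ sits inside the abelian algebra $C(\T^k)\otimes M$, and $e_{z,x}$ is the restriction of the character $\ev_z\otimes\ev_x^M$, hence is itself a character and so pure. Second, Theorem~\ref{thm: state ext} provides that each $e_{z,x}$ extends uniquely to the state $\psi_{z,x}$ on $\Upsilon(C^*(\Lambda))$. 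Third, Lemma~\ref{lem:jointfaith}\eqref{it:Y} says the family $\{\psi_{z,x}\}$ is jointly faithful on $\Upsilon(C^*(\Lambda))$. Thus hypothesis~\eqref{joint faithful} of Theorem~\ref{prop: main state} holds, and statement (i) of that theorem delivers exactly the injectivity of $\tilde\pi$ from injectivity of $\tilde\pi|_{\Upsilon(\mathscr{M})}$. Transporting this back via $\Upsilon$ completes the proof.

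No new obstacle should appear at this final stage: the genuinely difficult work has already been front-loaded into earlier sections, namely the identification of the cycline subalgebra $\mathscr{M}$ and its description via the loci $F_{\alpha,\beta}$ (Proposition~\ref{prop: supernormal}, Lemma~\ref{lem: compression}), the construction of the aperiodic representation on the regular path space $\mathfrak{T}$ (which invoked the Baire Category Theorem to guarantee density), the delicate computation of unique state extensions in Proposition~\ref{prop: ev unique ext} and Theorem~\ref{thm: state ext}, and the abstract framework of Theorem~\ref{prop: main state}. The present theorem is essentially an assembly step; the only point demanding attention is confirming that the pure state and unique extension properties survive the passage to $\Upsilon$, and both follow directly from the abelianness of $\Upsilon(\mathscr{M})$ and the definitions of $\psi_{z,x}$ and $e_{z,x}$.
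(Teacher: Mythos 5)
Your proposal is correct and follows essentially the same route as the paper: both apply Theorem~\ref{prop: main state} to the states $\mathcal{X}$ on $\Upsilon(\mathscr{M})$, using Theorem~\ref{thm: state ext} for the unique extensions $\psi_{z,x}$ and Lemma~\ref{lem:jointfaith} for joint faithfulness. The only cosmetic difference is that you push $\pi$ forward through $\Upsilon$ to a map $\tilde\pi$ on $\Upsilon(C^*(\Lambda))$, whereas the paper pulls the states $\mathcal{X}$ back to a family $\mathscr{X}$ on $\mathscr{M}\subset C^*(\Lambda)$; since $\Upsilon$ is an isomorphism onto its image these are equivalent.
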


\begin{proof}
By Remark~\ref{rmk: ab} we have $\mathscr{D}\subset \mathscr{M}$ are abelian subalgebras of $C^*(\Lambda)$. Since $\Upsilon: C^*(\Lambda)\to \Upsilon(C^*(\Lambda))$ is an isomorphism we can  pull back the pure states $\mathcal{X}$ on $\Upsilon(\mathscr{M})$ to get a set of pure states $\mathscr{X}$ on $\mathscr{M}$. By Theorem~\ref{thm: state ext} each of the pure states in $\mathcal{X}$ has unique extension so the same is true for the states in $\mathscr{X}$.     Further, by Lemma~\ref{lem:jointfaith} we know $\mathcal{X}$ is jointly faithful on $\Upsilon(\mathscr{M})$ so the set $\mathscr{X}$ is jointly faithful on $\mathscr{M}$.  Thus  Theorem~\ref{prop: main state} now gives that $\pi$ is injective.
\end{proof}

\begin{rmk}
Since $\mathscr{M}=\mathscr{D}$ if $\Lambda$ is aperiodic (see Remark~\ref{rmk: ap}), Theorem~\ref{thm: inj} recovers the usual Cuntz-Krieger uniqueness theorem for row-finite higher-rank graph algebras \cite[Theorem~4.6]{KP00}.
\end{rmk}



\end{document}